\newtheorem{theorem}{Theorem}
\newtheorem{proposition}{Proposition}
\newtheorem{corollary}{Corollary}
\newtheorem{lemma}{Lemma}
\theoremstyle{definition}
\DeclareMathOperator{\diam}{diam}
\newcommand{\<}{\langle}
\renewcommand{\>}{\rangle}
\renewcommand{\a}{\alpha}
\renewcommand{\l}{\lambda}
\newcommand{\mynull}{\mathop{\mathrm{null}}\nolimits}
\newcommand{\myimage}{\mathop{\mathrm{Im}}\nolimits}
\newcommand{\rank}{\mathop{\mathrm{rank}}\nolimits}
\newcommand{\bR}{{\mathbb R}}
\begin{document}

\title[]{On Steinerberger Curvature and Graph Distance Matrices}

\keywords{Graph Curvature, Distance Matrix, Perron--Frobenius}
\subjclass[2020]{05C12, 05C50}

\author{Wei-Chia Chen \and Mao-Pei Tsui}
\address{National Center for Theoretical Sciences, Mathematics Division, Taipei, 106, Taiwan\footnote{Present address: Department of Mathematics, Iowa State University, Ames, IA, 50011, United States}}
\email{achia0329@ncts.ntu.edu.tw\footnote{Present email: wcchen@iastate.edu}}
\address{Department of Mathematics, National Taiwan University, Taipei, 106, Taiwan; \newline National Center for Theoretical Sciences, Mathematics Division, Taipei, 106, Taiwan}
\email{maopei@math.ntu.edu.tw}

\begin{abstract} 
 Steinerberger proposed a notion of curvature on graphs involving the graph distance matrix (J. Graph Theory, 2023).
 We show that nonnegative curvature is almost preserved under three graph operations. We characterize the distance matrix and its null space after adding an edge between two graphs. Let $D$ be the graph distance matrix and $\mathbf{1}$ be the all-one vector. We provide a way to construct graphs so that the linear system $Dx = \mathbf{1}$ does not have a solution. 

\end{abstract}

\maketitle

\vspace{-8pt}

\section{Introduction} 
\subsection{Introduction.} 
In recent years, the concept of curvature has been introduced to finite connected graphs \cite{linluyau,Devriendt_2022, DEVRIENDT202468}. In his paper \cite{curvatureongraphs}, Steinerberger proposed a notion of curvature defined on the vertices of graphs. He shows that this notion of curvature satisfies several properties that a curvature would satisfy in differential geometry. These include a Bonnet--Myers theorem, a Cheng's theorem, and a Lichnerowicz theorem. 
A necessary condition for the curvature to have these properties is that the linear system $$Dx = \mathbf{1}$$ has a solution, where $D$ is the graph distance matrix and $\mathbf{1}$ is the all-one constant vector. Steinerberger investigated the $9059$ graphs in the Mathematica database and reported there are only $5$ graphs that the linear system does not have a solution \cite{curvatureongraphs}. This phenomenon leads to the intriguing question: why does the linear system of equations $Dx = \mathbf{1}$ tend to have a solution for most graphs?

\smallskip
In his subsequent work \cite{firsteig}, Steinerberger gave a sufficient condition for $Dx = \mathbf{1}$ to have a solution, in terms of the Perron root and the Perron eigenvector $\eta$ of $D$. A problem of this condition is that it degenerates to whether $D$ is invertible or not. This condition still motivates the study of $\<\eta, \mathbf{1}\>.$ 

\smallskip
In this paper, we extend the results in \cite{curvatureongraphs,firsteig}. The main results are as follows.
\begin{enumerate}
    \item We simplify the proof of the invariance of total curvature (\cite[Proposition 3]{curvatureongraphs}) without using von Neumann's Minimax Theorem. 
    \item  We show that nonnegative curvature will be preserved except for one or two vertices after adding an edge between two graphs, merging two graphs at a vertex, or removing a bridge from a graph.
    \item We characterize the distance matrix and its null space after adding an edge between two graphs.
    \item We show that if two graphs have the property that $Dx = \mathbf{1}$ has no solution, then after merging them at a vertex, the new graph has the same property.
    \item We provide a lower bound to $\<\eta, \mathbf{1}\>$ involving the number of leaves when the graph is a tree.
\end{enumerate}

 \subsection{Definitions.}
Let $G=(V,E)$ be a finite, connected graph. The \textit{distance} between two vertices $u$ and $v$, denoted as $d(u,v)$, is the length of a shortest path from $u$ to $v.$ The \textit{curvature} of graph $G$ proposed by Steinerberger in \cite{curvatureongraphs} is a function $\mu:V\to \bR$ so that for every vertex $u \in V,$ we have
$$
\sum_{v \in V} d(u,v)\mu(v) = |V|.
$$
Equivalently, if the vertices are $V=\{v_i:1\leq i\leq n\}$, by considering the vector $(\mu(v_1),...,\mu(v_n))$, the curvature of a graph is a vector $w$ satisfying
$$
Dw = n \cdot \mathbf{1},
$$where $D_{ij} = d(v_i,v_j)$ is the distance matrix of the graph.

\section{Main Results}
\subsection{Total Curvature and Bonnet--Myers Sharpness}
The following property of the invariance of the total curvature was proved by Steinerberger as a consequence of von Neumann's Minimax Theorem. Inspired by his remarks, we simplify the proof by using linear algebra.
\begin{theorem}[\cite{curvatureongraphs}] Let $G$ be a connected graph. Suppose there are $w_1,w_2 \in \bR^n_{\geq 0}$ so that $Dw_1=Dw_2=n\cdot\mathbf{1}.$ Then $\|w_1\|_1=\|w_2\|_1.$
\end{theorem}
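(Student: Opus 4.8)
The plan is to exploit the one structural feature of the distance matrix that the statement does not yet use, namely that $D$ is symmetric, together with the elementary fact that $\langle \mathbf{1}, w\rangle$ coincides with $\|w\|_1$ precisely when $w\geq 0$. The entire argument then reduces to evaluating the bilinear pairing $\langle w_2, Dw_1\rangle$ in two different ways, so no appeal to von Neumann's Minimax Theorem is needed.

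Concretely, first I would record the two facts that drive everything. Since $D_{ij}=d(v_i,v_j)=d(v_j,v_i)=D_{ji}$, the matrix is symmetric, $D^T=D$; and for any $w\in\bR^n_{\geq 0}$ one has $\|w\|_1=\sum_i w_i=\langle \mathbf{1}, w\rangle$. Next, using the hypothesis $Dw_1=n\cdot\mathbf{1}$, I would compute
$$\langle w_2, Dw_1\rangle=\langle w_2, n\cdot\mathbf{1}\rangle=n\langle \mathbf{1}, w_2\rangle=n\|w_2\|_1.$$
Then, moving $D$ across the pairing by symmetry and invoking $Dw_2=n\cdot\mathbf{1}$,
$$\langle w_2, Dw_1\rangle=\langle Dw_2, w_1\rangle=\langle n\cdot\mathbf{1}, w_1\rangle=n\|w_1\|_1.$$
Comparing the two evaluations and cancelling the factor $n$ yields $\|w_1\|_1=\|w_2\|_1$, as desired.

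I do not anticipate a genuine obstacle here; the content is conceptual rather than computational, and the proof is essentially two lines once the right pairing is chosen. The only place that requires care is the passage from $\langle \mathbf{1}, w\rangle$ to $\|w\|_1$, and this is exactly where the hypothesis $w_1,w_2\in\bR^n_{\geq 0}$ is consumed: without nonnegativity the same computation still shows that the signed total curvature $\langle \mathbf{1}, w\rangle$ is an invariant of $G$, but it no longer controls the $\ell^1$ norm. It is worth emphasizing that the symmetry of $D$ is the indispensable input, since the identical manipulation would fail for a general non-symmetric matrix; this is presumably the linear-algebraic observation that streamlines Steinerberger's original argument.
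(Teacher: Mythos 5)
Your proof is correct and is essentially identical to the paper's: both evaluate the bilinear pairing $w_1^T D w_2$ in two ways using the symmetry of $D$ and the hypotheses $Dw_i = n\cdot\mathbf{1}$, with nonnegativity only consumed in identifying $\langle \mathbf{1}, w_i\rangle$ with $\|w_i\|_1$. Your closing remark that the argument yields invariance of $\langle \mathbf{1}, w\rangle$ without the nonnegativity assumption is exactly the relaxation the paper records immediately after its proof.
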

\begin{proof}
We have $$n \< w_1,\mathbf{1}\>=w_1^TDw_2 = n\<w_2,\mathbf{1}\>.$$
\end{proof}
    From the proof above, Theorem 1 can be relaxed without the assumption of nonnegative curvatures: Let $G$ be a connected graph. If $w_1$ and $w_2$ are curvatures of $G$, then $\<w_1,\mathbf{1}\> = \<w_2,\mathbf{1}\>.$

    \smallskip
    The following proposition states if a graph admits a nonnegative curvature, then the curvature at a vertex cannot be larger than half of the total curvature.

    \begin{proposition}
    \label{prop:curv_upper_half_total}
        Let $G$ be a connected graph with nonnegative curvature $w.$ Namely, $Dw = n \cdot\mathbf{1}$ with $n = |V(G)|$ and $w \in \mathbb{R}^n_{\geq 0}.$ Then for each vertex $u \in V(G)$ we have $w(u) \leq \frac{1}{2}\|w\|_1.$ 
    \end{proposition}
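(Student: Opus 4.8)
The plan is to derive the bound by playing the curvature equation at $u$ against the curvature equation at one of its neighbors. Since $w \ge 0$ we have $\|w\|_1 = \sum_v w(v)$, so the claim $w(u) \le \frac{1}{2}\|w\|_1$ is equivalent to $2w(u) \le \sum_v w(v)$, that is, $w(u) \le \sum_{v \neq u} w(v)$. I would target the clean inequality $2w(u)\le\|w\|_1$ directly. (If $n=1$ the system $Dw = n\cdot\mathbf{1}$ has no solution and the statement is vacuous, so I may assume $n \ge 2$, whence $u$ has at least one neighbor.)

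First I would fix a neighbor $x$ of $u$, so that $d(u,x)=1$, and write down the two instances of the defining equation, $\sum_v d(u,v)w(v) = n$ and $\sum_v d(x,v)w(v) = n$. In the second equation the $v=u$ term contributes exactly $d(x,u)w(u)=w(u)$, which is what will ultimately produce the factor I want.

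The key estimate is the triangle inequality written as $d(x,v) \ge d(u,v) - d(u,x) = d(u,v)-1$ for every $v$. Because $w(v)\ge 0$, I may multiply through and bound the $x$-equation termwise from below: $n = \sum_v d(x,v)w(v) \ge w(u) + \sum_{v\neq u}\bigl(d(u,v)-1\bigr)w(v)$. Then I recognize $\sum_{v\neq u} d(u,v)w(v) = \sum_v d(u,v)w(v) = n$ (the $v=u$ term vanishes) and $\sum_{v\neq u} w(v) = \|w\|_1 - w(u)$, so the right-hand side collapses to $n + 2w(u) - \|w\|_1$. Cancelling $n$ leaves $2w(u) \le \|w\|_1$, as desired.

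The step I expect to need the most care is ensuring the nonnegativity hypothesis is invoked exactly where it matters: once to identify $\|w\|_1$ with $\sum_v w(v)$, and—more essentially—to pass from $d(x,v)\ge d(u,v)-1$ to the termwise bound $d(x,v)w(v) \ge \bigl(d(u,v)-1\bigr)w(v)$, since without $w\ge 0$ the direction of this inequality could reverse. Beyond connectedness (which guarantees both a neighbor of $u$ and finite distances), no structural assumption on $G$ is required, and the remainder is bookkeeping.
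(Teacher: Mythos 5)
Your proof is correct and is essentially the paper's argument: both compare the curvature equation at $u$ with the one at a neighbor, apply the triangle inequality $|d(u,v)-d(x,v)|\le 1$ termwise using $w\ge 0$, and arrive at $w(u)\le \sum_{v\neq u}w(v)$. The only difference is cosmetic --- the paper phrases it as a proof by contradiction while you derive the bound $2w(u)\le\|w\|_1$ directly (and your handling of the vacuous case $n=1$ is a nice touch).
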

    \begin{proof}
        Suppose for the contradiction that $w(u_1) > \frac{\|w\|_1}{2}$ for some vertex $u_1.$ Let $u_2$ be a neighbor of $u_1.$ Note that
        \begin{align*}
        0 = n - n &= \sum_{v \in V(G)}d(u_1,v)w(v) - \sum_{v \in V(G)}d(u_2,v)w(v) \\
        &= \sum_{v \in V(G)}(d(u_1,v)-d(u_2,v))w(v) \\
        &= -w(u_1) + \sum_{v \neq u_1}(d(u_1,v) - d(u_2,v))w(v).
        \end{align*}
        By the triangle inequality, 
        $$
        |d(u_1,v)- d(u_2,v)| \leq 1.
        $$ Therefore, since $w$ is a nonnegative curvature, we have 
        $$
        w(u_1) \leq \sum_{v\neq u_1}w(v) = \|w\|_1 - w(u_1) < \|w\|_1 - \frac{1}{2}\|w\|_1 = \frac{1}{2}\|w\|_1,
        $$ a contradiction. Thus, for every vertex $u,$ we have $w(u) \leq \frac{1}{2}\|w\|_1.$
    \end{proof}


\smallskip
The discrete Bonnet--Myers theorem in \cite{curvatureongraphs} states that if $G$ has a nonnegative curvature $w$ so that $Dw = n \cdot\mathbf{1}$ with $K = \min_i w_i \geq 0,$ then 
$$
\diam G \leq \frac{2n}{\|w\|_{1}}.
$$ In addition, if $\diam G\cdot K = 2$, then $G$ has a constant curvature. We say the graph $G$ is \textit{discrete Bonnet--Myers sharp} if $\diam G\cdot K = 2$. Inspired by \cite{CUSHING2020107188}, we find that the discrete Bonnet--Myers sharpness will be preserved under the Cartesian product.
\begin{proposition}
    Let $G_1,G_2$ be connected graphs with curvatures bounded below by $K_1,K_2 \geq 0,$ respectively. Suppose $G_1,G_2$ are discrete Bonnet--Myers sharp. Then the Cartesian product graph $G_1 \square G_2$ is discrete Bonnet--Myers sharp. 
\end{proposition}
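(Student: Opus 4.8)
The plan is to build a curvature of the product graph directly out of the curvatures of the factors and to exploit the additivity of distances in a Cartesian product. The three facts I will use are standard: for $(u_1,u_2),(v_1,v_2)\in V(G_1)\times V(G_2)$ one has $d((u_1,u_2),(v_1,v_2)) = d_1(u_1,v_1) + d_2(u_2,v_2)$; consequently $\diam(G_1\square G_2) = \diam G_1 + \diam G_2$; and $|V(G_1\square G_2)| = n_1 n_2$, where $n_i = |V(G_i)|$. Since each $G_i$ is discrete Bonnet--Myers sharp, the cited theorem forces its curvature to be constant, so $w_i \equiv K_i$ with $K_i = 2/\diam G_i > 0$; written out, the equation $D_i w_i = n_i \mathbf{1}$ then says that each $G_i$ is transmission regular, namely $\sum_{v} d_i(u,v) = n_i/K_i$ for every vertex $u$ of $G_i$.

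The key step is to show that the constant vector $w \equiv c$ on $V(G_1\square G_2)$ is a curvature for a suitable $c>0$. Using distance additivity and summing the two coordinates separately,
\begin{align*}
\sum_{(v_1,v_2)} d((u_1,u_2),(v_1,v_2))\, c
&= c\Big( n_2 \sum_{v_1} d_1(u_1,v_1) + n_1 \sum_{v_2} d_2(u_2,v_2)\Big) \\
&= c\Big( n_2\cdot \tfrac{n_1}{K_1} + n_1 \cdot \tfrac{n_2}{K_2}\Big)
= c\, n_1 n_2\Big(\tfrac{1}{K_1}+\tfrac{1}{K_2}\Big),
\end{align*}
which is independent of the base point $(u_1,u_2)$. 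Matching this against $n_1 n_2$ forces $c = \big(\tfrac1{K_1}+\tfrac1{K_2}\big)^{-1} = \tfrac{K_1K_2}{K_1+K_2}$. Thus $w\equiv c$ is a constant nonnegative curvature of $G_1\square G_2$, so the product graph has constant curvature and $K = c$, i.e. $\tfrac1K = \tfrac1{K_1}+\tfrac1{K_2}$.

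It then remains to verify the sharpness identity. Using sharpness of each factor, $\diam G_i = 2/K_i$, so by additivity of the diameter
$$\diam(G_1\square G_2)\cdot K = \Big(\tfrac{2}{K_1}+\tfrac{2}{K_2}\Big)\Big(\tfrac{1}{K_1}+\tfrac{1}{K_2}\Big)^{-1} = 2\Big(\tfrac1{K_1}+\tfrac1{K_2}\Big)\Big(\tfrac1{K_1}+\tfrac1{K_2}\Big)^{-1} = 2,$$
which is exactly the discrete Bonnet--Myers sharpness of $G_1\square G_2$. I expect the only real obstacle to be the key step: one must check that a constant vector genuinely solves $Dw=n\mathbf{1}$ on the product, and the reason it does is the additivity of the Cartesian-product metric combined with the transmission regularity supplied by sharpness of the factors. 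Once the harmonic-mean relation $1/K = 1/K_1 + 1/K_2$ is in hand, pairing it with $\diam = \diam G_1 + \diam G_2 = 2/K_1 + 2/K_2$ makes $\diam\cdot K = 2$ immediate.
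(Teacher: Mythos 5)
Your proof is correct and follows the same outline as the paper's: sharpness forces each factor to have constant curvature $K_i>0$, the product then has constant curvature $K = K_1K_2/(K_1+K_2)$, and additivity of the diameter gives $\diam(G_1 \square G_2)\cdot K = 2$. The only difference is that where the paper cites Steinerberger's product-curvature result (\cite[Proposition 2]{curvatureongraphs}) for the harmonic-mean formula, you derive it directly from distance additivity and transmission regularity of the factors, which makes the argument self-contained but is not a different route.
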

\begin{proof}
    The discrete Bonnet--Myers theorem above implies that $G_1$ and $G_2$ have constant curvature $K_1,K_2 > 0.$
    By \cite[Proposition 2]{curvatureongraphs}, $G_1 \square G_2$ has constant curvature $K>0$ and 
    $$
    K = (\frac{1}{K_{1}}+\frac{1}{K_{2}})^{-1} = \frac{K_{1}K_{2}}{K_{1} + K_{2}}.
    $$
    Since $\diam (G_1 \square G_2) = \diam G_1 + \diam G_2,$ we have
    $$
    \diam (G_1 \square G_2) \cdot  K = 2.
    $$
    \end{proof}

\subsection{Bridging, Merging, and Cutting Graphs} 
Nonnegative curvature will be preserved except for at most two vertices under three basic graph operations. Let $G_1$ and $G_2$ be two graphs whose distance matrices are $D_1$ and $D_2$, respectively. Assume that $G_i$ has $n_i$ vertices for $i=1,2.$ We can \textit{bridge} $G_1$ and $G_2$ to obtain a larger graph by adding an edge between them. We can \textit{merge} $G_1$ and $G_2$ at a vertex to obtain a graph $H$ by first bridging $G_1$ and $G_2$ via an edge, then performing an edge contraction on this edge. Given a connected graph with a bridge, we can \textit{cut} the graph into two components by removing the bridge.

\begin{theorem}[Bridging Graphs] Suppose $G_1$ and $G_2$ have nonnegative curvature, namely, $D_iw_i=n_i\cdot\mathbf{1}$ holds for some $w_i\in\bR^{n_i}_{\geq 0}$ for $i=1,2$. Then the graph $G$ obtained by adding an edge $e=\{u,v\}$ between $G_1$ and $G_2$ has a curvature nonpositive at $u$ and $v$ and nonnegative at every other vertex.
\end{theorem}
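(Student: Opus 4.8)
The plan is to write the distance matrix $D$ of the bridged graph $G$ in block form and then exhibit an explicit curvature vector assembled from $w_1$, $w_2$ and the two bridge endpoints. Ordering the vertices so that those of $G_1$ come first, the single bridging edge forces every $G_1$–$G_2$ geodesic to cross $e=\{u,v\}$, while all distances internal to each $G_i$ are unchanged; hence
\[
D=\begin{pmatrix} D_1 & B\\ B^{T} & D_2\end{pmatrix},\qquad B_{ab}=d_1(a,u)+1+d_2(v,b).
\]
The structural observation that drives everything is that the two ``distance-to-the-bridge'' vectors are columns of the original distance matrices: writing $p=D_1e_u$ and $q=D_2e_v$ (so $p_a=d_1(a,u)$, $q_b=d_2(v,b)$), we get $B=p\,\mathbf{1}^{T}+\mathbf{1}\mathbf{1}^{T}+\mathbf{1}q^{T}$, so that $By$ and $B^{T}x$ are, for any $x,y$, combinations of $\mathbf{1}$, $p$, $q$ whose coefficients are the scalars $\mathbf{1}^{T}x,\ \mathbf{1}^{T}y,\ p^{T}x,\ q^{T}y$.

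Next I would make the ansatz $w=(x,y)$ with
\[
x=\tfrac{r}{n_1}\,w_1-s\,e_u,\qquad y=\tfrac{t}{n_2}\,w_2-s\,e_v,
\]
for three scalars $s,r,t$ to be pinned down. Because $D_1w_1=n_1\mathbf{1}$ and $p=D_1e_u$, one has $D_1x=r\,\mathbf{1}-s\,p$ and likewise $D_2y=t\,\mathbf{1}-s\,q$, so $Dw=n\cdot\mathbf{1}$ reduces to scalar constraints. Pairing the blocks against $w_1,w_2,e_u,e_v$ and invoking the identities $w_1^{T}p=(D_1w_1)^{T}e_u=n_1$ and $e_u^{T}p=d_1(u,u)=0$ (with their $G_2$ analogues), the system collapses to $r+s+t=n$ together with $r=2n_1s/\|w_1\|_1$ and $t=2n_2s/\|w_2\|_1$, which are solved by
\[
s=\frac{n}{1+\tfrac{2n_1}{\|w_1\|_1}+\tfrac{2n_2}{\|w_2\|_1}}>0,\qquad r=\frac{2n_1s}{\|w_1\|_1}>0,\qquad t=\frac{2n_2s}{\|w_2\|_1}>0.
\]
Rather than argue by back-substitution I would then verify $Dw=n\cdot\mathbf{1}$ directly; once the four inner products $\mathbf{1}^{T}x=\mathbf{1}^{T}y=s$, $p^{T}x=r$, $q^{T}y=t$ are recorded, each block reduces to $(r+s+t)\mathbf{1}=n\cdot\mathbf{1}$.

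Finally comes the sign analysis, where the actual content sits. Away from the endpoints the curvature is automatically nonnegative: for $a\ne u$ we have $x(a)=\tfrac{r}{n_1}w_1(a)\ge 0$ since $r>0$ and $w_1\ge 0$, and symmetrically on $G_2$. At the endpoint itself, $x(u)=\tfrac{r}{n_1}w_1(u)-s=\big(\tfrac{2w_1(u)}{\|w_1\|_1}-1\big)s$, so $x(u)\le 0$ is \emph{equivalent} to $w_1(u)\le\tfrac12\|w_1\|_1$, which is precisely Proposition~\ref{prop:curv_upper_half_total} applied to $G_1$; the analogous bound gives $y(v)\le 0$.

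The step I expect to be most delicate is confirming that the apparently nonlinear dependence of the coefficients $\mathbf{1}^{T}x,p^{T}x,\dots$ on $x,y$ really closes into a solvable \emph{linear} system in $(s,r,t)$; the identities $w_1^{T}p=n_1$ and $p_u=0$, which hinge on the symmetry of $D_1$ and the curvature equation, are exactly what make it close, and recognizing $p=D_1e_u$ is the crucial idea. The remainder is bookkeeping, and the payoff is that the bound of Proposition~\ref{prop:curv_upper_half_total} is calibrated to force nonpositivity at exactly the two new bridge vertices and nowhere else.
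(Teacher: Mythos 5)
Your proof is correct and follows essentially the same route as the paper's: the same block decomposition of $D_G$ with off-diagonal blocks $p\mathbf{1}^{T}+\mathbf{1}\mathbf{1}^{T}+\mathbf{1}q^{T}$, the same key identities $p^{T}w_1=n_1$, $q^{T}w_2=n_2$, $p_u=q_v=0$, an ansatz of the same form (scaled copies of $w_1,w_2$ minus a common correction at the two bridge endpoints, differing from the paper only in normalization), and the same appeal to Proposition~\ref{prop:curv_upper_half_total} for nonpositivity at $u$ and $v$.
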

    \begin{figure}[h]
        \centering
        \subfigure{\includegraphics[width = 0.32\textwidth]{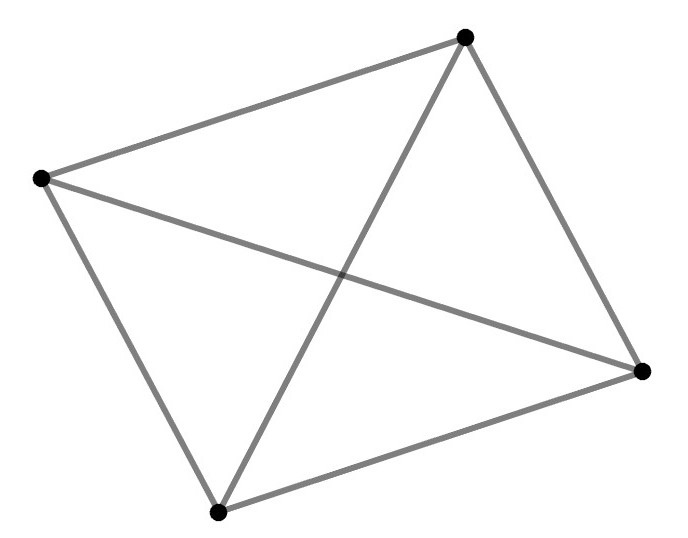}}
        \subfigure{\includegraphics[width = 0.32\textwidth]{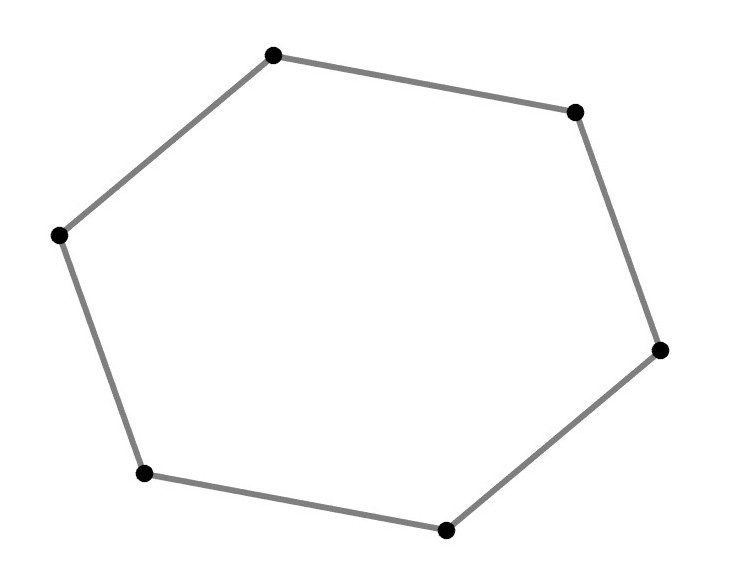}}
        \subfigure{\includegraphics[width = 0.32\textwidth]{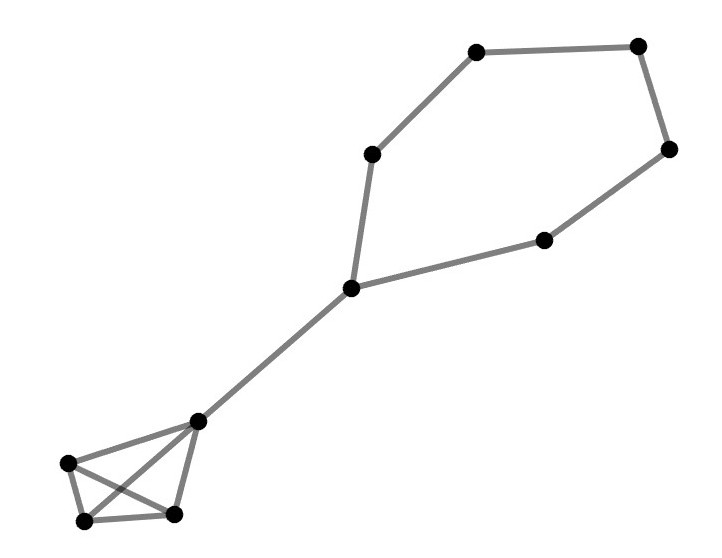}}
        \caption{Adding an edge between the complete graph $K_4$ and the cycle $C_6$.}
        \label{fig:C6 bridge K4}
    \end{figure}
As we will show, if $w_1,w_2$ are curvatures of $G_1$ and $G_2,$ respectively, then the curvature of $G$ at $u$ and at $v$ are
\begin{equation}
\label{eq:cuv_form_u_n1}
\frac{\|w_2\|_1(n_1+n_2)}{n_1\|w_2\|_1 + n_2\|w_1\|_1 + \frac{1}{2}\|w_1\|_1\|w_2\|_1}\cdot ((w_1)_{n_1}-\frac{1}{2}\|w_1\|_1)
\end{equation} and 
\begin{equation}
\label{eq:cuv_form_v_1}
\frac{\|w_1\|_1(n_1+n_2)}{n_1\|w_2\|_1 + n_2\|w_1\|_1 + \frac{1}{2}\|w_1\|_1\|w_2\|_1}\cdot ((w_2)_1 - \frac{1}{2}\|w_2\|_1),
\end{equation} respectively. By Proposition \ref{prop:curv_upper_half_total}, the curvatures at $u$ and at $v$ are nonpositive.

The curvature of $G$ at the endpoints of the bridge can indeed be negative. For example, consider adding an edge between two cycles $C_3$. The new graph has a unique curvature
$$
w = (\frac{12}{11},\frac{12}{11},\frac{-6}{11},\frac{-6}{11},\frac{12}{11},\frac{12}{11}).
$$

The proof of the above theorem was inspired by Bapat's work \cite{Bapat2010}. By using induction and decomposing the distance matrix into smaller ones, Bapat proved that for a tree, the distance matrix satisfies the equation
$$D\tau = (n-1)\cdot \mathbf{1}.$$ where $\tau = \mathbf{2} - (\deg(v_1),...\deg(v_n))^t$. 
We will relate the distance matrix of the larger graph $G$ to the distance matrices of $G_1$ and $G_2$ in our proof.

\begin{proof}[Proof of Theorem 2]
Let $V(G_1) = \{u_i : 1 \leq i \leq n_1\}$ and $V(G_2) = \{v_j:1 \leq j \leq n_2\}.$
The main observation is that if $u_i \in V(G_1)$ and $v_j \in V(G_2),$ then the shortest path from $u_i$ to $v_j$ has to pass through the edge $\{u,v\}.$ Relabel the vertices so that $u=u_{n_1}$ is the last vertex of $G_1$ and $v=v_1$ is the first vertex of $G_2.$ The observation implies
$$
d_G(u_i,v_j) = d_{G_1}(u_i,u_{n_1}) + 1 + d_{G_2}(v_1,v_j)
$$ for $1 \leq i \leq n_1, 1 \leq j \leq n_2.$
Let $y$ be the last column of $D_1$ and $z$ be the first column of $D_2.$ In other words,
\begin{align*}
    y_i &= d_{G_1}(u_i,u_{n_1}) \\
    z_j &= d_{G_2}(v_j,v_1)= d_{G_2}(v_1,v_j).
\end{align*}
If $D_G$ is the distance matrix of $G$, we can write 
$$
D_G = \begin{bmatrix}
    D_1 & y\mathbf{1}^t+\mathbf{1}\mathbf{1}^t+\mathbf{1}z^t \\
    \mathbf{1}y^t+ \mathbf{1}\mathbf{1}^t+z\mathbf{1}^t & D_2
\end{bmatrix}.
$$
Let $\a,s\in \bR$ be chosen later. Let $e_{n_1},e_{n_1+1} \in \bR^{n_1+n_2}$ be the $n_1$-th and the $(n_1+1)$-th standard coordinate vectors, respectively. 
Define
\begin{equation}
\label{eq:w}
w = \begin{bmatrix}
    \a w_1  \\
    w_2 
\end{bmatrix}+ se_{n_1}+se_{n_1+1}.
\end{equation}
Then $$
    D_Gw = \begin{bmatrix}
        \a n_1 \mathbf{1} + y\mathbf{1}^tw_2+ \mathbf{1}\mathbf{1}^tw_2 + \mathbf{1}z^tw_2 \\
        \a\mathbf{1}y^tw_1 + \a\mathbf{1}\mathbf{1}^tw_1 + \a z\mathbf{1}^tw_1 + n_2\mathbf{1}
    \end{bmatrix} + 
    \begin{bmatrix}
        s y \\
        s (z + \mathbf{1})
    \end{bmatrix} + 
    \begin{bmatrix}
        s (y + \mathbf{1}) \\
        s z
    \end{bmatrix}
    $$ since $z_1=y_{n_1}=0.$ By looking at the $n_1$-th row and the first row of $D_iw_i = n_i \cdot \mathbf{1}$, we have $y^tw_1=n_1$ and $z^tw_2=n_2.$ Therefore,
    $$
    D_Gw = \begin{bmatrix}
        (\a n_1 + \mathbf{1}^tw_2 + n_2 + s)\mathbf{1} + (2s + \mathbf{1}^tw_2)y \\
        (\a n_1 + n_2 + \a \mathbf{1}^tw_1 + s)\mathbf{1} + (2 s + \a\mathbf{1}^tw_1)z
    \end{bmatrix}.
    $$
    Define 
    $$
    s = \frac{-\mathbf{1}^tw_2}{2}, \a = \frac{\mathbf{1}^tw_2}{\mathbf{1}^tw_1} > 0.
    $$
    Note that since $\mathbf{1}^tw_1 > 0,$ the number $\a$ is well-defined. Then $2s = -\mathbf{1}^tw_2=-\a\mathbf{1}^tw_1.$
    Thus, we get
    $$
    D_Gw = (\a n_1+\mathbf{1}^tw_2 + n_2 + s)\mathbf{1} = (\a n_1 + n_2 + \frac{\mathbf{1}^tw_2}{2})\mathbf{1}.
    $$ This implies $G$ admits a curvature after scaling. We have
    $$
    \a n_1 + n_2 + \frac{\mathbf{1}^tw_2}{2} > 0.
    $$ Therefore, $G$ admits a curvature nonnegative everywhere except at the vertices $u_{n_1}$ and $v_1.$

    We now show equations \eqref{eq:cuv_form_u_n1} and \eqref{eq:cuv_form_v_1}. Note that by our construction and equation \eqref{eq:w}, the curvature at $u=u_{n_1}$ is
    \begin{align*}
   (\a\cdot (w_1)_{n_1}+s )&\cdot \frac{n_1+n_2}{\a n_1 + n_2 + \frac{\mathbf{1}^tw_2}{2}} \\
    &= (\frac{\|w_2\|_1}{\|w_1\|_1}\cdot (w_1)_{n_1}-\frac{\|w_2\|_1}{2}) \cdot \frac{n_1+n_2}{(\|w_2\|_1/\|w_1\|_1)\cdot n_1+n_2+\|w_2\|_1/2} \\
    &= \frac{\|w_2\|_1(n_1+n_2)}{n_1\|w_2\|_1 + n_2\|w_1\|_1+\frac{1}{2}\|w_1\|_1\|w_2\|_1}((w_1)_{n_1}-\frac{1}{2}\|w_1\|_1).
\end{align*} By similar computations, the curvature at $v = v_1$ is 
$$\frac{\|w_1\|_1(n_1+n_2)}{n_1\|w_2\|_1 + n_2\|w_1\|_1 + \frac{1}{2}\|w_1\|_1\|w_2\|_1}\cdot ((w_2)_1 - \frac{1}{2}\|w_2\|_1).$$ These show equations \eqref{eq:cuv_form_u_n1} and \eqref{eq:cuv_form_v_1}. By Proposition \ref{prop:curv_upper_half_total}, $$(w_1)_{n_1} - \frac{1}{2}\|w_1\|_1 \leq 0$$ and $$(w_2)_1 - \frac{1}{2}\|w_2\|_1 \leq 0.$$ Thus, the curvature of $G$ at $u$ and at $v$ are nonpositive.
    \end{proof}
    
\begin{corollary}
\label{cor:bridge-two-constant-curvature}
    Assume that $G'$ has constant curvature $K>0$ and $n = |V(G')|.$ Let $G$ be the graph obtained by adding an edge between two copies of $G'$. The curvature of $G$ has value $\frac{(2-n)2K}{4+K}<0$ at the vertices belonging to the edge and $\frac{4K}{4+K}>0$ at all the other vertices.
\end{corollary}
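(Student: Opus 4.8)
The plan is to apply Theorem 2 directly, since bridging two copies of $G'$ is exactly its setup with $G_1 = G_2 = G'$. The only inputs needed are the two curvature vectors and their $\ell^1$-norms. Because $G'$ has constant curvature $K$, its curvature vector is $w_1 = w_2 = K\mathbf{1}$, so that $\|w_1\|_1 = \|w_2\|_1 = nK$, while $n_1 = n_2 = n$ and $(w_1)_{n_1} = (w_2)_1 = K$. Everything then reduces to substituting these values into the formulas produced in Theorem 2.

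For the two endpoints of the bridge I would substitute into \eqref{eq:cuv_form_u_n1} (the other endpoint being identical by symmetry via \eqref{eq:cuv_form_v_1}). The scalar prefactor collapses to
$$
\frac{\|w_2\|_1(n_1+n_2)}{n_1\|w_2\|_1 + n_2\|w_1\|_1 + \tfrac12\|w_1\|_1\|w_2\|_1}
= \frac{nK\cdot 2n}{n^2K + n^2K + \tfrac12 n^2K^2}
= \frac{4}{4+K},
$$
while the remaining factor is $(w_1)_{n_1} - \tfrac12\|w_1\|_1 = K - \tfrac12 nK = \tfrac{2-n}{2}K$. Multiplying gives the claimed value $\frac{(2-n)2K}{4+K}$, which is nonpositive by Proposition \ref{prop:curv_upper_half_total} and strictly negative once $n \ge 3$.

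For every remaining vertex I would read the curvature off the construction \eqref{eq:w} in the proof of Theorem 2 rather than from the boxed formulas, since only the two bridge endpoints are recorded there explicitly. After the global normalization by $\frac{n_1+n_2}{\alpha n_1 + n_2 + \frac12\|w_2\|_1}$, a non-endpoint vertex $u_i$ of the first copy carries curvature $\frac{n_1+n_2}{\alpha n_1 + n_2 + \frac12\|w_2\|_1}\,\alpha (w_1)_i$, and symmetrically a non-endpoint vertex of the second copy carries $\frac{n_1+n_2}{\alpha n_1 + n_2 + \frac12\|w_2\|_1}\,(w_2)_j$. Here $\alpha = \|w_2\|_1/\|w_1\|_1 = 1$ and $(w_1)_i = (w_2)_j = K$, so the denominator equals $2n + \tfrac12 nK = \tfrac{n(4+K)}{2}$ and the value collapses to
$$
\frac{2n}{n(4+K)/2}\cdot K = \frac{4K}{4+K} > 0 .
$$

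There is no real obstacle here beyond careful bookkeeping of the constants; the one point requiring attention is that the non-endpoint curvatures are \emph{not} displayed in the statement of Theorem 2, so one must extract them from the vector $w$ in \eqref{eq:w} together with the scaling factor above before simplifying. The strictness of the two inequalities then follows from $K > 0$ and $n \ge 3$.
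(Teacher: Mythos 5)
Your proof is correct and takes essentially the same route as the paper, whose own proof is just the remark that one plugs $\alpha=1$ (i.e.\ $w_1=w_2=K\mathbf{1}$) into the construction \eqref{eq:w} from Theorem 2 and normalizes; your substitution into \eqref{eq:cuv_form_u_n1}--\eqref{eq:cuv_form_v_1} for the endpoints and into \eqref{eq:w} for the remaining vertices is the same computation made explicit. Your added caveat that strict negativity at the bridge endpoints needs $n\geq 3$ (for $n=2$ the value is $0$) is a correct refinement of the stated inequality.
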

\begin{figure}[h!]
    \centering
    \includegraphics[width = 0.35\textwidth]{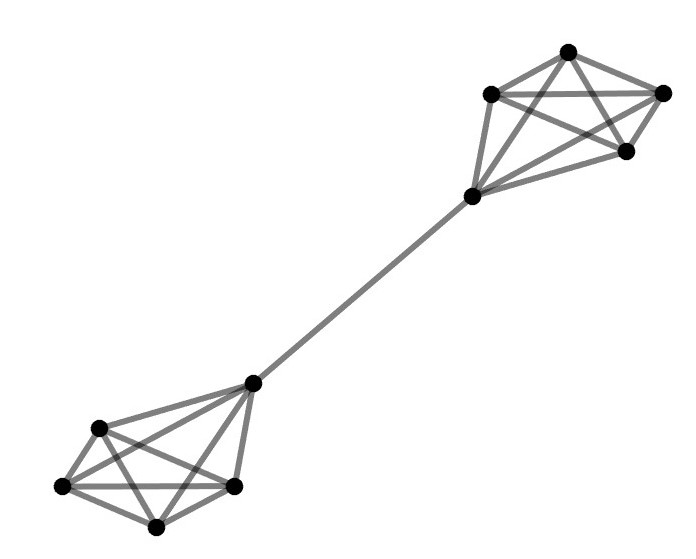}
    \caption{Adding an edge between two copies of $K_5$.}
    \label{fig:k5bridgek5}
\end{figure}

Corollary \ref{cor:bridge-two-constant-curvature} can be proved by plugging in $\a=1$ and $s = nK$ in equation \eqref{eq:w}.

The nonnegativeness of curvature will be preserved except for one vertex when we merge two graphs at this vertex.
\begin{theorem}[Merging Graphs] Suppose $G_1$ and $G_2$ have nonnegative curvature so that $D_iw_i=n_i\cdot\mathbf{1}$ for some $w_i \in \bR^{n_i}_{\geq 0}$. Then the graph $H$ obtained by adding an edge between $G_1$ and $G_2$ and performing an edge contraction on this edge has a curvature nonnegative everywhere except for the vertex of the contracted edge.
\end{theorem}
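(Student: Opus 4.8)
The plan is to mimic the proof of Theorem 2, but to account for the fact that contracting the bridge identifies $u_{n_1}$ with $v_1$, so that $H$ has $n_1+n_2-1$ vertices and the off-diagonal distances lose the ``$+1$'' coming from the bridge. Relabelling as before with $u=u_{n_1}$ and $v=v_1$, let $p$ denote the merged vertex, the common image of $u_{n_1}$ and $v_1$. Then for $u_i\in V(G_1)$ and $v_j\in V(G_2)$ one has $d_H(u_i,v_j)=d_{G_1}(u_i,u_{n_1})+d_{G_2}(v_1,v_j)=y_i+z_j$, with $y,z$ as in the proof of Theorem 2. Ordering the vertices of $H$ as $u_1,\dots,u_{n_1}$ (with $u_{n_1}=p$) followed by $v_2,\dots,v_{n_2}$, the distance matrix becomes
$$D_H=\begin{bmatrix} D_1 & y\hat{\mathbf 1}^t+\mathbf 1\hat z^t \\ \hat{\mathbf 1}y^t+\hat z\mathbf 1^t & \hat D_2\end{bmatrix},$$
where $\hat D_2$ is $D_2$ with its first row and column deleted and $\hat z,\hat{\mathbf 1}$ are $z,\mathbf 1$ with their first entries deleted.

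First I would record the identities inherited from $D_iw_i=n_i\mathbf 1$: reading off the last row of the first system and the first row of the second gives $y^tw_1=n_1$ and $z^tw_2=n_2$, while restricting $D_2w_2=n_2\mathbf 1$ to its last $n_2-1$ rows yields the truncated relation $\hat D_2\hat w_2=n_2\hat{\mathbf 1}-(w_2)_1\hat z$, where $\hat w_2=((w_2)_2,\dots,(w_2)_{n_2})^t$. These are the facts that let the blocks collapse.

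Next I would try the ansatz $w=\begin{bmatrix}\a w_1\\ \hat w_2\end{bmatrix}+s\,e_{n_1}$, with $\a,s\in\bR$ to be chosen, so that the correction $s\,e_{n_1}$ sits exactly on the merged vertex. Expanding $D_Hw$ blockwise using $D_1e_{n_1}=y$, $\hat{\mathbf 1}^t\hat w_2=\|w_2\|_1-(w_2)_1$, $\hat z^t\hat w_2=z^tw_2=n_2$, $y^t(\a w_1+se_{n_1})=\a n_1$, together with the truncated relation, the top block equals $(\a n_1+n_2)\mathbf 1+(s+\|w_2\|_1-(w_2)_1)y$ and the bottom block equals $(\a n_1+n_2)\hat{\mathbf 1}+(\a\|w_1\|_1+s-(w_2)_1)\hat z$. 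Forcing the coefficients of $y$ and $\hat z$ to vanish gives two expressions for $s$ whose consistency forces $\a=\|w_2\|_1/\|w_1\|_1>0$ and then $s=(w_2)_1-\|w_2\|_1$. With these choices $D_Hw=(\a n_1+n_2)\mathbf 1$ and $\a n_1+n_2>0$, so after rescaling $w$ is a curvature of $H$.

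Finally I would check the sign pattern. At every $G_1$-vertex other than $p$ the value is $\a(w_1)_i\ge0$ and at every $G_2$-vertex it is $(w_2)_j\ge0$, so $w$ is nonnegative off the merged vertex. At $p$ the value is $\a(w_1)_{n_1}+s=\frac{\|w_2\|_1}{\|w_1\|_1}(w_1)_{n_1}+(w_2)_1-\|w_2\|_1$, and applying Proposition \ref{prop:curv_upper_half_total} to both $(w_1)_{n_1}\le\frac12\|w_1\|_1$ and $(w_2)_1\le\frac12\|w_2\|_1$ shows this is $\le0$. Hence $H$ carries a curvature that is nonnegative away from the merged vertex (indeed nonpositive there), as claimed. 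I expect the main obstacle to be the bookkeeping around the shared vertex, in particular getting the truncated identity $\hat D_2\hat w_2=n_2\hat{\mathbf 1}-(w_2)_1\hat z$ right, since this is precisely where merging differs structurally from bridging, in which both full distance matrices appeared as diagonal blocks.
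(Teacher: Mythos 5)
Your proof is correct and follows essentially the same route as the paper's: the same block decomposition of $D_H$ with the truncated block $\hat D_2$, the same truncated identity $\hat D_2\hat w_2=n_2\hat{\mathbf 1}-(w_2)_1\hat z$, and the same ansatz of a correction supported at the merged vertex (your single parameter $s$ is just the paper's $s+g_1$), leading to the identical choices $\a=\|w_2\|_1/\|w_1\|_1$, $s=(w_2)_1-\|w_2\|_1$ and the identity $D_Hw=(\a n_1+n_2)\mathbf 1$. Your closing observation that the value at the merged vertex is in fact nonpositive, obtained by applying Proposition \ref{prop:curv_upper_half_total} to both $(w_1)_{n_1}$ and $(w_2)_1$, is a correct refinement beyond what the theorem asserts; the paper only records the formula for that value without a sign analysis.
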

\begin{figure}[h]
    \centering
    \includegraphics[width= 0.35\textwidth]{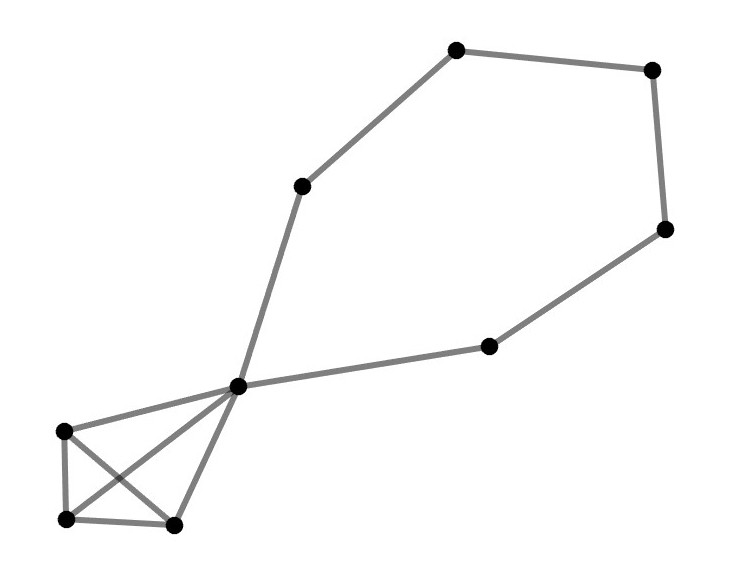}
    \caption{Merging the complete graph $K_4$ and the cycle $C_6$ at a vertex.}
    \label{fig:K4mergeC6}
\end{figure}

The idea is the same as the proof of Theorem 2. However, the analysis needs to be more careful.
\begin{proof}[Proof of Theorem 3]
    Write $V(G_1) = \{u_1,...,u_{n_1}\}$ and $V(G_2) = \{v_1,...,v_{n_2}\}$ so that the edge added and then contracted is $\{u_{n_1},v_1\}.$ Thus, $u_{n_1}$ and $v_1$ will be the identical vertex in $H.$ Let $y\in \bR^{n_1}$ be the last column of $D_1$ and $z\in \bR^{n_2-1}$ be the first column of $D_2$ without the first entry. Namely,
    $$
    y = \begin{bmatrix}
        d_{G_1}(u_1,u_{n_1}) \\
        \vdots \\
        d_{G_1}(u_{n_1},u_{n_1})
    \end{bmatrix}, 
    z = \begin{bmatrix}
        d_{G_2}(v_2,v_1) \\
        \vdots \\
        d_{G_2}(v_{n_2},v_1)
    \end{bmatrix}.
    $$ Let $w$ and $g$ be nonnegative vectors satisfying $D_1w = n_1\cdot \mathbf{1}$ and $D_2g = n_2 \cdot \mathbf{1}.$
    Let $\Bar{g} = (g_2,...,g_{n_2}),$ and $\Bar{D_2}$ be the matrix obtained by removing the first column and the first row of $D_2.$ Thus,
    $$
    D_2 = \begin{bmatrix}
        0 & z^t \\
        z & \Bar{D_2}
    \end{bmatrix}.
    $$
    The equation $D_2g = n_2 \cdot\mathbf{1}$ gives $z^t\Bar{g}=n_2$ and $\Bar{D_2}\Bar{g} = n_2\mathbf{1}-g_1z$. Similar to the proof of Theorem $2$, the shortest path in $H$ between $u_i$ and $v_j$ has to pass through the common vertex $u_{n_1}=v_1.$ We thus have
    $$
    d_H(u_i,v_j) = d_{G_1}(u_i,u_{n_1}) + d_{G_2}(v_1,v_j) = y_i + 1 + z_{j-1}
    $$
    for $1 \leq i \leq n_1$ and $2 \leq j \leq n_2$, since $d_{G_2}(v_1,v_j)=d_{G_2}(v_j,v_1).$ In addition,
    \begin{align*}
        d_H(u_i,u_j) &= d_{G_1}(u_i,u_j) \\
        d_H(v_i,v_j) &= d_{G_2}(v_i,v_j)
    \end{align*}
    hold for all $i,j.$ Therefore, we can write the distance matrix of $H$ as 
    $$
    D_H = \begin{bmatrix}
        D_1 & y\mathbf{1}^t + \mathbf{1}z^t \\
        \mathbf{1}y^t+z\mathbf{1}^t & \Bar{D_2}
    \end{bmatrix} \in \bR^{(n_1 + n_2 - 1)\times(n_1+n_2-1)}.
    $$
    Let $\a,s\in\bR$ be chosen later. Define a potential candidate of the curvature
    \begin{equation}
    \label{eq:w'}
    w' = \begin{bmatrix}
        \a w \\
        \mathbf{0}_{n_2-1}
    \end{bmatrix}+
    \begin{bmatrix}
        \mathbf{0}_{n_1} \\
        \Bar{g}
    \end{bmatrix} + 
    (s+g_1)e_{n_1} \in \bR^{n_1+n_2-1},
    \end{equation}
    where $e_{n_1}\in \bR^{n_1+n_2-1}$ is the $n_1$-th standard coordinate vector. Then
\begin{align*}
        D_Hw' &= \begin{bmatrix}
        \a n_1 \mathbf{1} \\
        \a \mathbf{1}y^tw + \a z \mathbf{1}^t w
    \end{bmatrix} + \begin{bmatrix}
        y\mathbf{1}^t\Bar{g} + \mathbf{1}z^t\Bar{g} \\
        n_2 \mathbf{1} - g_1 z
    \end{bmatrix} + (s+g_1)\begin{bmatrix}
        y \\ 
        z
    \end{bmatrix} \\
    &=
    \begin{bmatrix}
        (\a n_1 + z^t\Bar{g})\mathbf{1}\\
        (\a y^t w + n_2) \mathbf{1}
    \end{bmatrix} + \begin{bmatrix}
        (\mathbf{1}^t \Bar{g} + s + g_1)y \\
        (\a\mathbf{1}^t w + s)z
    \end{bmatrix}.
    \end{align*}
    Note that $z^t\Bar{g} = n_2$ and $y^tw = n_1.$ Set
    \begin{align*}
        s &= -g_1 - \mathbf{1}^t\Bar{g} = - \mathbf{1}^tg, \\
        \a &= \frac{-s}{\mathbf{1}^tw} = \frac{\mathbf{1}^tg}{\mathbf{1}^tw}.
    \end{align*}
    The fact that $w,g$ are nonnegative curvatures of $G_1$ and $G_2$, respectively, implies $\mathbf{1}^tw >0$ and $\mathbf{1}^tg>0$. Thus, $\a > 0$ is well-defined.
    We then have $$D_Hw' = (\a n_1 + n_2) \mathbf{1}.$$ Thus, we have
    $$
    D_H(\frac{n_1 + n_2 - 1}{\a n_1 + n_2}w') = (n_1+n_2-1)\cdot\mathbf{1}.
    $$This implies $H$ admits a curvature nonnegative everywhere except at the common vertex $u_{n_1}=v_1.$
\end{proof}
From our construction and equation \eqref{eq:w'}, the curvature of $H$ at the common vertex $u_{n_1}=v_1$ is 
$$
\frac{\|g\|_1(w)_{n_1}-\|w\|_1\|\Bar{g}\|_1}{\|g\|_1n_1+\|w\|_1n_2}\cdot (n_1+n_2-1).
$$

\smallskip
The following theorem states that nonnegative curvature will be preserved when we remove a bridge from a graph.
\begin{theorem}[Cutting Graphs] Suppose $G$ is a connected graph containing a bridge $e$. Let $G_1$ and $G_2$ be the components after removing $e$ from $G$. If $G$ has a nonnegative curvature then $G_1$ and $G_2$ have a nonnegative curvature. If $G$ has a constant curvature then $G_i$ has a constant curvature except at the vertices belonging to $e.$
\end{theorem}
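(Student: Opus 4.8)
The plan is to run the bridging construction of Theorem~2 in reverse. Since $e=\{u,v\}$ is a bridge, $G$ is precisely the graph obtained by bridging its two components $G_1$ and $G_2$ along $e$, so the block form of the distance matrix derived in that proof is available to us: writing $u=u_{n_1}$, $v=v_1$, letting $y=D_1e_{n_1}$ be the last column of $D_1$ and $z=D_2e_1$ the first column of $D_2$,
$$
D_G=\begin{bmatrix} D_1 & y\mathbf{1}^t+\mathbf{1}\mathbf{1}^t+\mathbf{1}z^t\\ \mathbf{1}y^t+\mathbf{1}\mathbf{1}^t+z\mathbf{1}^t & D_2\end{bmatrix}.
$$
Given a nonnegative curvature $w$ of $G$, I would partition it into blocks $a\in\bR^{n_1}_{\geq 0}$ and $b\in\bR^{n_2}_{\geq 0}$ and expand the two block rows of $D_Gw=(n_1+n_2)\mathbf{1}$, noting that every cross term is a scalar multiple of $\mathbf{1}$, $y$, or $z$.

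Collecting terms with $\alpha=\mathbf{1}^t a$, $\beta=\mathbf{1}^t b$, $\eta=y^t a$, and $\zeta=z^t b$, the top block becomes $D_1a+\beta y=(n_1+n_2-\beta-\zeta)\mathbf{1}$, and because $y=D_1e_{n_1}$ this folds into $D_1(a+\beta e_{n_1})=c\,\mathbf{1}$ with $c=n_1+n_2-\beta-\zeta$; symmetrically, using $z=D_2e_1$, the bottom block gives $D_2(b+\alpha e_1)=c'\mathbf{1}$ with $c'=n_1+n_2-\alpha-\eta$. Hence the only candidate curvatures for $G_1$ and $G_2$ are $\frac{n_1}{c}(a+\beta e_{n_1})$ and $\frac{n_2}{c'}(b+\alpha e_1)$, and since $a,b\geq 0$ and $\alpha,\beta\geq 0$, both are nonnegative as soon as $c,c'>0$.

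The crux, and the step I expect to require the most care, is the strict positivity of $c$ and $c'$. Assuming both components are nontrivial (a single-vertex component admits no curvature equation, so it is excluded), I would multiply $D_1(a+\beta e_{n_1})=c\,\mathbf{1}$ on the left by $\mathbf{1}^t$: the right-hand side is $c\,n_1$, while the left-hand side is $s^t x$, where $x=a+\beta e_{n_1}\geq 0$ and $s_j=\sum_i (D_1)_{ij}$ is the $j$-th column sum of $D_1$. Every column sum of a distance matrix on at least two vertices is strictly positive, and $x\neq 0$ (otherwise $w=0$, contradicting $D_Gw\neq 0$), so $s^t x>0$ and thus $c>0$; the identical argument yields $c'>0$. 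This proves that $G_1$ and $G_2$ inherit nonnegative curvature.

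Finally, for the constant-curvature statement I would substitute $w=K\mathbf{1}$, so that $a=K\mathbf{1}$ and $b=K\mathbf{1}$. Then $\frac{n_1}{c}(K\mathbf{1}+\beta e_{n_1})$ takes the single constant value $\frac{n_1K}{c}$ at every vertex of $G_1$ except $u=u_{n_1}$, where the extra term $\frac{n_1\beta}{c}e_{n_1}$ shifts it, and likewise $\frac{n_2}{c'}(K\mathbf{1}+\alpha e_1)$ is constant on $G_2$ away from $v=v_1$. Thus each $G_i$ has constant curvature except at the endpoint of $e$ that it contains, completing the proof.
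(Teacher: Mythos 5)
Your proof is correct, and it uses the same block decomposition and the same candidate curvatures as the paper: your $a+\beta e_{n_1}$ and $b+\alpha e_1$ are exactly the paper's $\Bar{w_1}=w_1+(\mathbf{1}^tw_2)e_{n_1}$ and $\Bar{w_2}=w_2+(\mathbf{1}^tw_1)e_1$. The genuine difference is in how positivity of the scaling constants is obtained. The paper first uses the $n_1$-th and $(n_1+1)$-th rows of $D_Gw=(n_1+n_2)\mathbf{1}$ to identify $c=y^tw_1$, $c'=z^tw_2$ and to deduce $\mathbf{1}^tw_1=\mathbf{1}^tw_2>0$, and then proves $y^tw_1>0$ by contradiction: if $y^tw_1=0$, nonnegativity forces $w_1=ce_{n_1}$ with $c>0$, and substituting back into the top block equation yields $2cy=\mathbf{0}$, hence $c=0$, a contradiction. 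Your replacement --- left-multiplying $D_1(a+\beta e_{n_1})=c\,\mathbf{1}$ by $\mathbf{1}^t$ and observing that a nonnegative, nonzero vector paired against the strictly positive column sums of $D_1$ gives a positive number --- is a one-line argument that bypasses the paper's case analysis entirely. What the paper's route buys is the combinatorial identification $c=y^tw_1$ and $c'=z^tw_2$ (so the rescaled curvatures have explicit form), but for proving the theorem your argument is simpler and equally rigorous.

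One shared caveat: both arguments require each component to have at least two vertices (you need the column sums of $D_i$ to be strictly positive; the paper needs $y\neq\mathbf{0}$ for the step ``$2cy=\mathbf{0}$ implies $c=0$''). You state this hypothesis explicitly while the paper leaves it implicit, and the exclusion is genuinely needed: for $G=K_2$ the bridge separates $G$ into single vertices, $G$ has nonnegative curvature, yet a one-vertex graph admits no curvature since its distance matrix is zero. Flagging this is a point in your favor rather than a gap.
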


\begin{proof}[Proof of Theorem 4]
Let $D_i$ be the distance matrices of $G_i$ for $i=1,2.$ Write $V(G_1) = \{u_1,...,u_{n_1}\}$ and $V(G_2) = \{v_1,...v_{n_2}\}$ so that the bridge is $e=\{u_{n_1},v_1\}.$ Since $G$ has a nonnegative curvature, we have 
$$
D_Gw = (n_1 + n_2) \cdot \mathbf{1}
$$
for some $w \in \bR^{n_1+n_2}_{\geq 0}.$ Write
$$
w = \begin{bmatrix}
    w_1 \\
    w_2
\end{bmatrix},
$$where $w_i \in \bR^{n_i}_{\geq 0}.$ Let $y$ be the last column of $D_1$ and $z$ be the first column of $D_2$, as in the proof of Theorem 2. Then
$$
D_G = \begin{bmatrix}
    D_1 & y \mathbf{1}^t + \mathbf{1}\mathbf{1}^t + \mathbf{1}z^t \\
    \mathbf{1}y^t + \mathbf{1}\mathbf{1}^t + z\mathbf{1}^t & D_2
\end{bmatrix}.
$$
Since $D_G w = (n_1+n_2)\cdot\mathbf{1},$ we get
\begin{align}
\label{eq:3}
    D_1w_1 + y\mathbf{1}^tw_2+\mathbf{1}\mathbf{1}^tw_2+ \mathbf{1}z^tw_2 &= (n_1+n_2)\cdot\mathbf{1} \\
    \label{eq:5}
    \mathbf{1}y^tw_1 + \mathbf{1}\mathbf{1}^tw_1 + z\mathbf{1}^tw_1 + D_2w_2 &= (n_1+n_2)\cdot\mathbf{1}.
\end{align}
The last row of equation \eqref{eq:3}, the first row of equation \eqref{eq:5}, together with $y_{n_1}=z_1=0$ give
\begin{align}
\label{eq:6}
    y^tw_1 + (z+\mathbf{1})^tw_2 &= n_1 + n_2 \\
    \label{eq:4}
    (\mathbf{1}+y)^tw_1 + z^tw_2 &= n_1 + n_2.
\end{align}
Define 
\begin{align*}
    \Bar{w_1}&=w_1 + (\mathbf{1}^t w_2) e_{n_1} \\
    \Bar{w_2} &= w_2 + (\mathbf{1}^t w_1) e_1,
\end{align*} where $e_{n_1},e_1$ are the $n_1$-th and the first coordinate vectors in $\bR^{n_1}$ and $\bR^{n_2}$, respectively.
Then 
\begin{align*}
D_1\Bar{w_1} &= D_1w_1 + \mathbf{1}^tw_2 y = (n_1+n_2-\mathbf{1}^tw_2-z^tw_2)\mathbf{1} = (y^tw_1)\mathbf{1} \\
D_2 \Bar{w_2} &= D_2w_2 + \mathbf{1}^tw_1 z = (n_1+n_2 - y^tw_1 - \mathbf{1}^tw_1)\mathbf{1} = (z^tw_2)\mathbf{1},
\end{align*}by equations \eqref{eq:3} to \eqref{eq:4}.

We claim that $y^tw_1,z^tw_2 >0.$ 
Suppose $y^tw_1 = 0.$ Since $w=\begin{bmatrix}
    w_1 \\
    w_2
\end{bmatrix}$ satisfies
$$
D_Gw = (n_1+n_2) \cdot \mathbf{1} 
$$ and $D_G$ is a nonnegative matrix, we have 
$$
\mathbf{1}^t w = \mathbf{1}^t w_1 + \mathbf{1}^t w_2 > 0.
$$
Note that equations \eqref{eq:6} and \eqref{eq:4} imply $\mathbf{1}^tw_1 = \mathbf{1}^t w_2.$ Therefore, $\mathbf{1}^tw_1 = \mathbf{1}^tw_2 > 0.$ Since $w_1 \in \bR^{n_1}_{\geq 0}$ and $y_{n_1} = 0,$ we have
$$
0 < \mathbf{1}^tw_1 \leq y^tw_1 + (w_1)_{n_1} = (w_1)_{n_1}.
$$
This implies $w_1 = ce_{n_1}$ with $c=(w_1)_{n_1}>0.$ Plugging this into equation \eqref{eq:3}, we get
\begin{align*}
    cy + (\mathbf{1}^tw_2)y= (n_1 + n_2 - \mathbf{1}^tw_2-z^tw_2)\cdot\mathbf{1} = (y^tw_1)\cdot \mathbf{1}=\mathbf{0},
\end{align*}
where the second equality follows by equation $\eqref{eq:6}.$ Since $\mathbf{1}^tw_2= \mathbf{1}^tw_1 = c,$ we get $2cy = \mathbf{0}.$
This implies $c = 0$ and $w_1=\mathbf{0}.$ Then $\mathbf{1}^tw_1=0,$ a contradiction. Therefore, we have $y^t w_1 > 0$. A similar argument shows that $z^tw_2 > 0.$

\smallskip
Consider
\begin{align*}
w_1' &= \frac{n_1}{y^tw_1}\Bar{w_1} \\
w_2' &= \frac{n_2}{z^tw_2}\Bar{w_2}.
\end{align*}
Then $D_iw_i'=n_i\cdot \mathbf{1}$ for $i=1,2.$ Thus, $G_i$ has a nonnegative curvature for $i=1,2.$

If both $w_1$ and $w_2$ are constant, then by construction, $w_1'$ and $w_2'$ are constant everywhere except at vertices $u_{n_1}$ and $v_1,$ respectively. 
\end{proof}

\subsection{Null Space of Graph Distance Matrix} 
In the previous section, we created a new graph $G$ by adding an edge between two graphs $G_1$ and $G_2$. In this section, we give a characterization of the null space of the distance matrix of $G.$
\begin{theorem}
    Let $G_1, G_2$ be two connected graphs admitting curvatures. Suppose $G_i$ has $n_i$ vertices for $i=1,2$. Let $G$ be the graph obtained by adding an edge between $G_1$ and $G_2$. Let $D_G,D_1,D_2$ be the distance matrices of $G,G_1,$ and $G_2$, respectively. Then we have
    $$
    \mynull D_G=\mynull D_1 \oplus \mynull D_2
    $$
    and 
    $$\dim \mynull D_G = \dim \mynull D_1 + \dim \mynull D_2,$$ where we canonically embed $\mynull D_i$ to $\bR^{n_1+n_2}$ by augmenting zeros.
\end{theorem}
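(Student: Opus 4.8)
The plan is to work with the block form of $D_G$ established in the proof of Theorem~2, namely
\begin{equation*}
D_G = \begin{bmatrix}
    D_1 & y\mathbf{1}^t+\mathbf{1}\mathbf{1}^t+\mathbf{1}z^t \\
    \mathbf{1}y^t+ \mathbf{1}\mathbf{1}^t+z\mathbf{1}^t & D_2
\end{bmatrix},
\end{equation*}
where $y$ is the last column of $D_1$, $z$ is the first column of $D_2$, and $y_{n_1}=z_1=0$. The inclusion $\mynull D_1\oplus\mynull D_2\subseteq\mynull D_G$ is the easy direction: given $p\in\mynull D_1$ and $q\in\mynull D_2$, I would check that $\begin{bmatrix}p\\ q\end{bmatrix}\in\mynull D_G$ by expanding the two block rows. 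The off-diagonal rank-one terms contribute scalars such as $\mathbf{1}^t p$, $\mathbf{1}^t q$, $y^t p$, and $z^t q$; the key subfact I would establish first is that $D_i p_i = \mathbf{0}$ forces all these pairings to vanish. Concretely, $D_1 p=\mathbf{0}$ implies $y^t p=0$ since $y$ is a column of $D_1$, and pairing $\mathbf{1}^t$ against $D_1 p=\mathbf{0}$ (using that $D_1$ is symmetric) gives $\mathbf{1}^t p=0$ as well, provided $\mathbf{1}\in\myimage D_1$ — which holds precisely because $G_1$ admits a curvature. This is where the hypothesis that both $G_i$ admit curvatures enters, and it is the crux of the whole argument.

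The hard direction is $\mynull D_G\subseteq\mynull D_1\oplus\mynull D_2$. I would take $\begin{bmatrix}p\\ q\end{bmatrix}\in\mynull D_G$ and write out the two block equations
\begin{align*}
D_1 p + (y^t\!\!\;p)\text{-type terms} &= \mathbf{0},\\
(\text{analogous terms}) + D_2 q &= \mathbf{0},
\end{align*}
meaning $D_1 p + (\mathbf{1}^t q)\,y + (\mathbf{1}^t q)\,\mathbf{1} + (z^t q)\,\mathbf{1}=\mathbf{0}$ and the symmetric equation in the second block. The goal is to deduce $\mathbf{1}^t p = \mathbf{1}^t q = 0$, after which the rank-one coupling terms collapse and the equations reduce to $D_1 p=\mathbf{0}$ and $D_2 q=\mathbf{0}$ separately. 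To extract $\mathbf{1}^t p=0$, I would pair the first block equation with the curvature vector $w_1$ of $G_1$ (so $\mathbf{1}^t = \frac{1}{n_1}w_1^t D_1$): left-multiplying by $w_1^t$ kills $w_1^t D_1 p$ down to $n_1\mathbf{1}^t p$, and the constants $y^t w_1=n_1$, $\mathbf{1}^t w_1=\|w_1\|_1$ turn the remaining terms into a linear relation among $\mathbf{1}^t p$, $\mathbf{1}^t q$, $z^t q$. Doing the same with $w_2$ on the second block yields a second relation, and I expect these two relations (together with the pairings $y^t p$, $z^t q$ obtained by reading off the $n_1$-th and first coordinates) to force $\mathbf{1}^t p=\mathbf{1}^t q=0$.

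Once $\mathbf{1}^t p=\mathbf{1}^t q=0$ is in hand, the block equations simplify to $D_1 p + (z^t q)\,\mathbf{1}=\mathbf{0}$ and $D_2 q + (y^t p)\,\mathbf{1}=\mathbf{0}$, so I would next argue the residual scalars $z^t q$ and $y^t p$ also vanish. Pairing $D_1 p = -(z^t q)\mathbf{1}$ with $w_1$ gives $0 = -(z^t q)\|w_1\|_1$, and since $\|w_1\|_1>0$ this forces $z^t q=0$, hence $D_1 p=\mathbf{0}$; symmetrically $D_2 q=\mathbf{0}$. This establishes the reverse inclusion and therefore the direct-sum decomposition $\mynull D_G=\mynull D_1\oplus\mynull D_2$ under the embedding by augmenting zeros, and the dimension formula follows immediately by additivity of dimension over a direct sum. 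The main obstacle I anticipate is the bookkeeping in the hard direction: correctly tracking which rank-one term produces which scalar and verifying that the resulting linear system in $(\mathbf{1}^t p,\,\mathbf{1}^t q,\,y^t p,\,z^t q)$ is nondegenerate. The crucial structural input making all of this work is that admitting a curvature is exactly the statement $\mathbf{1}\in\myimage D_i$, which lets me convert every awkward $\mathbf{1}^t(\cdot)$ pairing into a controlled pairing against the curvature vector.
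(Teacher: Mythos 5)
Your proposal is correct and shares the paper's overall architecture --- the same block form of $D_G$, the identical easy direction (using $y=D_1e_{n_1}$ and $\mathbf{1}\in\myimage D_1$ to kill the pairings), and the same reduction of the hard direction to showing that the four scalars $a=\mathbf{1}^tp$, $b=\mathbf{1}^tq$, $c=y^tp$, $d=z^tq$ vanish --- but you close that reduction by a genuinely different mechanism. The paper gets one relation for free by invoking Theorem 2: since $G_1,G_2$ are (nonnegatively) curved, the bridged graph $G$ admits a curvature, hence $\mathbf{1}\in\myimage D_G=(\mynull D_G)^\perp$, giving $a+b=0$ outright; combined with the two coordinate readings this forces $a=b=0$, and the residual scalars $c,d$ are then eliminated via the invariance of total curvature (the remark after Theorem 1). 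You never invoke Theorem 2; instead you manufacture two extra relations by pairing the block equations against $w_1$ and $w_2$. Your anticipated obstacle --- nondegeneracy of the resulting $4\times 4$ system --- does resolve affirmatively: with $W_i=\mathbf{1}^tw_i>0$, the coordinate readings give $c+b+d=0$ and $c+a+d=0$ (hence $a=b$), the $w_1$-pairing gives $2n_1a+W_1(a+d)=0$ (hence $c=2n_1a/W_1$), the $w_2$-pairing gives $W_2(c+a)+2n_2a=0$ (hence $d=2n_2a/W_2$), and substituting back into $c+a+d=0$ yields $a\bigl(\tfrac{2n_1}{W_1}+1+\tfrac{2n_2}{W_2}\bigr)=0$, so $a=b=c=d=0$. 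Your final pairing step ($w_1^tD_1p=-(z^tq)\,\mathbf{1}^tw_1$) is exactly the computation that proves the paper's Theorem 1, so it is morally the paper's appeal to invariance of total curvature done by hand. What each route buys: the paper's is shorter but leans on Theorem 2 (whose proof itself needs nonnegative curvature); yours is self-contained within the block algebra, at the cost of a slightly larger scalar system. One shared caveat: both your argument (via $\mathbf{1}^tw_i=\|w_i\|_1>0$) and the paper's (via ``nonnegatively curved'') quietly use more than the theorem's stated hypothesis of merely ``admitting curvatures''; what is genuinely needed is that the total curvature $\mathbf{1}^tw_i$ is positive, which is automatic for nonzero nonnegative curvatures but is not asserted for arbitrary signed solutions of $D_iw_i=n_i\mathbf{1}$.
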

This implies that $$
\rank D_G = \rank D_1 + \rank D_2.$$

\begin{proof}[Proof of Theorem 5]
    As in the proof of Theorem 2, we write
    $$
    D_G = \begin{bmatrix}
    D_1 & y\mathbf{1}^t+\mathbf{1}\mathbf{1}^t+\mathbf{1}z^t \\
    \mathbf{1}y^t+ \mathbf{1}\mathbf{1}^t+z\mathbf{1}^t & D_2
    \end{bmatrix},
$$where $y$ is the last column of $D_1,$ and $z$ is the first column of $D_2.$ Since $G_1$ and $G_2$ are nonnegatively curved, $\mathbf{1} \in \myimage D_i=(\mynull D_i) ^\perp.$ In addition, by Theorem $2$, $G$ admits a curvature. This implies $$\mathbf{1} \in \myimage D_G = (\mynull D_G)^\perp.$$ If $\eta \in \mynull D_1$, then $y^t\eta=\mathbf{1}^t\eta=0$. This implies $D_G\begin{bmatrix}
    \eta \\
    \mathbf{0}_{n_2}
\end{bmatrix} = \mathbf{0}.$ Similarly, if $\xi \in \mynull D_2$, then $\mathbf{1}^t\xi=z^t\xi=0$. This implies $D_G \begin{bmatrix}
    \mathbf{0}_{n_1} \\
    \xi
\end{bmatrix}= \mathbf{0}.$ Therefore, if $\{\eta_1,...,\eta_{k_1}\}$ is a basis of $\mynull D_1$ and $\{\xi_1,...,\xi_{k_2}\}$ is a basis of $\mynull D_2,$ then
$$\left\{\begin{bmatrix}
        \eta_1 \\
        \mathbf{0}_{n_2}
    \end{bmatrix},...,\begin{bmatrix}
        \eta_{k_1} \\
        \mathbf{0}_{n_2}
    \end{bmatrix}, \begin{bmatrix}
        \mathbf{0}_{n_1} \\
        \xi_1
    \end{bmatrix},...,\begin{bmatrix}
        \mathbf{0}_{n_1} \\
        \xi_{k_2}
    \end{bmatrix}\right\}$$ is linearly independent in $\mynull D_G .$ This shows that $$\dim \mynull D_G \geq k_1+k_2 = \dim \mynull D_1 + \dim \mynull D_2.$$

    On the other hand, suppose $\begin{bmatrix}
        \eta \\
        \xi
    \end{bmatrix} \in \mynull D_G.$ Our goal is to show that $\eta \in \mynull D_1$ and $\xi \in \mynull D_2$. We have
    \begin{align}
        \label{eq:1}
        \mathbf{0}_{n_1} &= D_1 \eta + y\mathbf{1}^t\xi + \mathbf{1}\mathbf{1}^t\xi  +\mathbf{1}z^t\xi\\
        \label{eq:2}
        \mathbf{0}_{n_2} &= \mathbf{1}y^t\eta  + \mathbf{1}\mathbf{1}^t\eta + z\mathbf{1}^t\eta + D_2\xi \\
        0 &= \mathbf{1}^t\begin{bmatrix}
            \eta \\
            \xi
        \end{bmatrix}. 
    \end{align}
By looking at the $n_1$-th row of the first equation and using $y_{n_1}=0,$ we get
$$
0 = y^t \eta  + \mathbf{1}^t \xi + z^t \xi.
$$
The first row of the second equation and $z_1=0$ gives
$$
0 = y^t\eta + \mathbf{1}^t \eta + z^t \xi.
$$
Combining these with the third equation, we conclude that 
$$
\mathbf{1}^t \eta = \mathbf{1}^t \xi = 0.
$$
Therefore, equations \eqref{eq:1} and \eqref{eq:2} give
\begin{align*}
D_1 \eta &= -(z^t \xi)  \mathbf{1} \\
D_2 \xi &= -(y^t\eta)  \mathbf{1}.
\end{align*}
Suppose that $z^t\xi \neq 0.$ Since $G_1$ admits a nonnegative curvature $w_1$ and at least one of its entries is not zero, by the invariance of total curvature (see the remark after Theorem 1), we have $0 < \mathbf{1}^tw_1 = \mathbf{1}^t\frac{\eta}{-z^t\xi}=0,$ a contradiction. Thus, $z^t\xi= 0$ and $D_1\eta = \mathbf{0}.$ Similarly, we have $D_2 \xi = \mathbf{0.}$ Therefore, $\eta \in \mynull D_1$ and $\xi \in \mynull D_2.$ 

\smallskip
We can thus write $\eta = c_1 \eta_1 + \cdots + c_{k_1} \eta_{k_1}$ and $\xi = d_1 \xi_1 + \cdots + d_{k_2}\xi_{k_2}$ where $c_i,d_j \in \bR$. This means that
$$\begin{bmatrix}
    \eta \\
    \xi
\end{bmatrix} = c_1 \begin{bmatrix}
    \eta_1 \\
    \mathbf{0}_{n_2}
\end{bmatrix} + \cdots + c_{k_1}\begin{bmatrix}
    \eta_{k_1}\\
    \mathbf{0}_{n_2}
\end{bmatrix} + d_1 \begin{bmatrix}
    \mathbf{0}_{n_1} \\
    \xi_1
\end{bmatrix} + \cdots + d_{k_2}\begin{bmatrix}
    \mathbf{0}_{n_1} \\
    \xi_{k_2}
\end{bmatrix}.$$
Thus, the vectors $$\begin{bmatrix}
        \eta_1 \\
        \mathbf{0}_{n_2}
    \end{bmatrix},...,\begin{bmatrix}
        \eta_{k_1} \\
        \mathbf{0}_{n_2}
    \end{bmatrix}, \begin{bmatrix}
        \mathbf{0}_{n_1} \\
        \xi_1
    \end{bmatrix},...,\begin{bmatrix}
        \mathbf{0}_{n_1} \\
        \xi_{k_2}
    \end{bmatrix}$$ form a basis of $\mynull D_G.$ This implies
    $$
    \dim \mynull D_G = \dim \mynull D_1 + \dim \mynull D_2,
    $$ as desired.

\end{proof}

\subsection{Nonexistence of Curvature} 
A necessary condition for the curvature to have desirable geometric properties is that the linear system $Dx = \mathbf{1}$ has a solution. Steinerberger raised the following problem.

\smallskip
\begin{quote}
\textbf{Problem (\cite{firsteig}).} It seems that for most graphs, the linear system $Dx=\mathbf{1}$ tends to have a solution. Why is that?
\end{quote}

He gave a sufficient condition for $Dx=\mathbf{1}$ to have a solution.
\begin{proposition}[\cite{firsteig}]
    Suppose $D \in \bR^{n \times n}_{\geq 0}$ has eigenvalues $\l_1 > 0 \geq \l_2 \geq \cdots \geq \l_n$ and eigenvector $D\eta = \l_1 \eta$ with $\|\eta\|_2 = 1.$ 
    If 
    \begin{equation}
    \label{eq:condition_inn}
        1 - \<\eta, \frac{1}{\sqrt{n}}\>^2 < \frac{|\l_2|}{\l_1 - \l_2},
        \end{equation}then the linear system $Dx = \mathbf{1}$ has a solution.
\end{proposition}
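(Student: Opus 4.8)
The plan is to use that the distance matrix $D$ is real symmetric, hence orthogonally diagonalizable, and that $\eta$ is the unit Perron eigenvector for the simple top eigenvalue $\lambda_1$. The system $Dx=\mathbf 1$ is solvable precisely when $\mathbf 1\in\myimage D=(\mynull D)^\perp$, so the statement is really about whether $0$ is an eigenvalue of $D$. My first step is therefore to record elementary bounds on the two sides of the hypothesis. By Cauchy--Schwarz, $\langle\eta,\tfrac1{\sqrt n}\mathbf 1\rangle^2\le\|\eta\|_2^2\,\|\tfrac1{\sqrt n}\mathbf 1\|_2^2=1$, so the left-hand side $1-\langle\eta,\tfrac1{\sqrt n}\mathbf 1\rangle^2$ is always nonnegative; and since $\lambda_1>0\ge\lambda_2$ we have $\lambda_1-\lambda_2>0$, so the right-hand side $\tfrac{|\lambda_2|}{\lambda_1-\lambda_2}$ has the sign of $|\lambda_2|$.

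The key observation, which I expect to carry all the weight, is that these two bounds force $\lambda_2\neq 0$. Indeed, if $\lambda_2=0$ the right-hand side vanishes and the hypothesis becomes $1-\langle\eta,\tfrac1{\sqrt n}\mathbf 1\rangle^2<0$, contradicting the nonnegativity just established. Hence whenever the hypothesis holds we must have $\lambda_2<0$. Because the eigenvalues are ordered $\lambda_1>0>\lambda_2\ge\lambda_3\ge\cdots\ge\lambda_n$, every eigenvalue except $\lambda_1$ is then strictly negative, so $0$ is not an eigenvalue of $D$; equivalently $\mynull D=\{\mathbf 0\}$ and $D$ is invertible. Then $x=D^{-1}\mathbf 1$ is the unique solution, completing the argument.

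I would close by noting that this is precisely the degeneration the paper points to: the inner-product inequality is strong enough to exclude a zero eigenvalue, so it cannot hold unless $D$ is already invertible. An equivalent, more variational way to see the crucial step, which I would mention as an alternative, is that any $v\in\mynull D$ satisfies $v\perp\eta$ (eigenvectors for the distinct eigenvalues $0$ and $\lambda_1$), so the Rayleigh bound $0=\langle Dv,v\rangle\le\lambda_2\|v\|_2^2$ on $\eta^\perp$ forces $v=\mathbf 0$ once $\lambda_2<0$. The only point requiring a word of care is the invocation of the simplicity of $\lambda_1$ and positivity of $\eta$, which follow from Perron--Frobenius applied to the symmetric, nonnegative, irreducible distance matrix of a connected graph; none of this is a genuine obstacle, as the eigenvalue ordering is already granted in the hypotheses.
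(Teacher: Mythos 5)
Your proof is correct and coincides with the paper's own treatment of this statement: the paper likewise observes that Cauchy--Schwarz makes the hypothesis vacuous when $\lambda_2 = 0$, while $\lambda_1 > 0 > \lambda_2 \geq \cdots \geq \lambda_n$ forces $D$ to be invertible, so that $Dx = \mathbf{1}$ trivially has a solution. This case split is exactly the ``degeneration'' the paper points out immediately after the proposition, so your argument matches the paper's reasoning in substance.
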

The proof is correct. However, this condition degenerates to the trivial condition of whether $D$ is invertible or not. Since if $\l_1 > 0 > \l_2 \geq \cdots \geq \l_n$, then $D$ is invertible. This implies that $Dx=\mathbf{1}$ has a solution. If $\l_1 > 0 = \l_2 \geq \cdots \geq \l_n$ then the right-hand side of inequality \eqref{eq:condition_inn} is $0$. The Cauchy--Schwarz inequality then implies that condition \eqref{eq:condition_inn} will never be satisfied.

\smallskip
By merging two graphs at a vertex, we can create graphs so that $Dx=\mathbf{1}$ does not have a solution.

\begin{theorem}
    Let $G_1$ and $G_2$ be two connected graphs so that $D_ix = \mathbf{1}$ does not have a solution. Let $H$ be obtained by adding an edge between $G_1$ and $G_2$, then performing an edge contraction on this edge. If $D_H$ is the distance matrix of $H$ then 
    $$D_Hx=\mathbf{1}$$ does not have a solution.
\end{theorem}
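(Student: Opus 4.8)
The plan is to argue by contradiction and to peel the two graphs apart by re-absorbing the rank-one corrections in $D_H$ into columns of $D_1$ and $D_2$. I would start from the block decomposition established in the proof of Theorem 3,
\[
D_H = \begin{bmatrix} D_1 & y\mathbf{1}^t + \mathbf{1}z^t \\ \mathbf{1}y^t + z\mathbf{1}^t & \Bar{D_2}\end{bmatrix},
\]
where $y = D_1 e_{n_1}$ is the last column of $D_1$ (so $y_{n_1}=0$), $z$ is the first column of $D_2$ with its leading zero deleted, and $D_2 = \bigl[\begin{smallmatrix} 0 & z^t \\ z & \Bar{D_2}\end{smallmatrix}\bigr]$. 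Suppose for contradiction that $D_H x = \mathbf{1}$ has a solution $x = \bigl[\begin{smallmatrix} a \\ b \end{smallmatrix}\bigr]$ with $a \in \bR^{n_1}$ and $b \in \bR^{n_2-1}$, and set $\beta = \mathbf{1}^t b$ and $\gamma = z^t b$. The top block reads $D_1 a + \beta y + \gamma \mathbf{1} = \mathbf{1}$; since $\beta y = D_1(\beta e_{n_1})$, this becomes the single identity $D_1(a + \beta e_{n_1}) = (1-\gamma)\mathbf{1}$.

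From here I would split on the value of $\gamma$. If $\gamma \neq 1$, then $(1-\gamma)^{-1}(a+\beta e_{n_1})$ solves $D_1 x = \mathbf{1}$, contradicting the hypothesis on $G_1$. Hence $\gamma = 1$ and $a + \beta e_{n_1} \in \mynull D_1$. Because $D_1$ is symmetric with $y = D_1 e_{n_1}$, pairing this null relation against $e_{n_1}$ gives $y^t a = y^t(a+\beta e_{n_1}) - \beta y_{n_1} = e_{n_1}^t D_1(a+\beta e_{n_1}) - \beta y_{n_1} = 0$. Feeding $y^t a = 0$ into the bottom block $\mathbf{1}(y^t a) + z(\mathbf{1}^t a) + \Bar{D_2} b = \mathbf{1}$ leaves $\Bar{D_2} b + (\mathbf{1}^t a)\,z = \mathbf{1}$. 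I would then reassemble these into a single equation over $G_2$: setting $\tilde b = \bigl[\begin{smallmatrix} \mathbf{1}^t a \\ b \end{smallmatrix}\bigr] \in \bR^{n_2}$ and using the $2\times 2$ block form of $D_2$, the top entry of $D_2 \tilde b$ is $z^t b = \gamma = 1$ and the remaining entries are $(\mathbf{1}^t a)\,z + \Bar{D_2} b = \mathbf{1}$, so $D_2 \tilde b = \mathbf{1}$, contradicting the hypothesis on $G_2$. Both cases being impossible, $D_H x = \mathbf{1}$ has no solution. Note that this argument uses only the unsolvability hypotheses, not nonnegative curvature.

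The crux is the coupling of the two blocks through the shared vertex, which at first glance obstructs any splitting of the system. The device that makes it work is recognizing $y$ and $z$ as (truncations of) columns of $D_1$ and $D_2$, so the rank-one terms $\beta y$ and $(\mathbf{1}^t a)\,z$ can be folded back into $D_1$ and $D_2$. The genuinely delicate point is the case $\gamma = 1$: one must supply the correct scalar first coordinate $\mathbf{1}^t a$ for $\tilde b$ and verify, via symmetry of $D_1$, that the cross term $y^t a$ vanishes so the bottom block closes up to exactly $\mathbf{1}$. I expect this vanishing-of-$y^t a$ step to be the part most easily overlooked.
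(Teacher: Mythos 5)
Your proof is correct, and it takes a genuinely different route from the paper's. The paper works forward through null spaces: it observes that $D_ix=\mathbf{1}$ being unsolvable is equivalent (by symmetry, via $\myimage D_i = (\mynull D_i)^\perp$) to the existence of $\eta \in \mynull D_1$ and $\xi \in \mynull D_2$ with $\mathbf{1}^t\eta \neq 0$, $\mathbf{1}^t\xi \neq 0$, and then explicitly constructs a null vector of $D_H$ meeting $\mathbf{1}$ nontrivially, namely $\zeta = \a \bigl[\begin{smallmatrix}\eta \\ \mathbf{0}\end{smallmatrix}\bigr] + \bigl[\begin{smallmatrix}\mathbf{0} \\ \Bar{\xi}\end{smallmatrix}\bigr] + (s+\xi_1)e_{n_1}$ with the tuned constants $s = -\mathbf{1}^t\xi$ and $\a = \mathbf{1}^t\xi/\mathbf{1}^t\eta$ (the same ansatz as in the proof of Theorem 3). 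You instead argue by contrapositive, transferring a hypothetical solution of $D_Hx=\mathbf{1}$ backward: folding $\beta y$ into $D_1$ gives $D_1(a+\beta e_{n_1}) = (1-\gamma)\mathbf{1}$, and the dichotomy $\gamma \neq 1$ versus $\gamma = 1$ produces a solution of $D_1x=\mathbf{1}$ or of $D_2x=\mathbf{1}$, respectively. Both arguments invoke the symmetry of the distance matrix at exactly one crucial point — the paper through $\myimage D = (\mynull D)^\perp$, you through $y^t = e_{n_1}^tD_1$ to force $y^ta = 0$ so that the bottom block closes up to $D_2\tilde{b} = \mathbf{1}$. What your route buys is the absence of any ansatz with constants to be solved for, plus a sharper accounting of which hypothesis fails in which case (in particular, your argument shows that solvability of $D_Hx=\mathbf{1}$ forces solvability for at least one of the two pieces); what the paper's route buys is an explicit element of $\mynull D_H$ not orthogonal to $\mathbf{1}$, which sits naturally alongside the null-space decomposition $\mynull D_G = \mynull D_1 \oplus \mynull D_2$ of Theorem 5.
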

For other methods to construct graphs such that $Dx = \mathbf{1}$ has no solution, we refer the reader to \cite{dudarov2023image}.

\begin{proof}

As in the proof of Theorem 3, we assume $V(G_1) = \{u_1,...,u_{n_1}\}, V(G_2)=\{v_1,...,v_{n_2}\}$ and $\{u_{n_1},v_1\}$ is the edge added and contracted. The condition that $D_1x=\mathbf{1}$ has no solution is equivalent to $\mathbf{1}\not\in \myimage D_1 =(\mynull D_1)^\perp.$ This is equivalent to that there is $\eta \in \mynull D_1$ with $\<\eta, \mathbf{1}\> \neq 0.$ Similarly, we can find a vector $\xi \in \mynull D_2$ with $\<\xi,\mathbf{1}\> \neq 0.$ Our goal is to find a vector $\zeta \in \mynull D_H$ with $\<\zeta, \mathbf{1}\> \neq 0.$

Consider the vector
$$
\zeta = \a \begin{bmatrix}
    \eta \\
    \mathbf{0}_{n_2-1} 
\end{bmatrix}+ \begin{bmatrix}
    \mathbf{0}_{n_1} \\
    \Bar{\xi}
\end{bmatrix}+
(s + \xi_1)e_{n_1},
$$where $\Bar{\xi} = (\xi_2,...,\xi_{n_2})$, $e_{n_1}$ is the $n_1$-th coordinate vector in $\bR^{n_1+n_2-1}$, and $\a,s\in \bR$ are to be chosen. As in the proof of Theorem 3, let $y\in\bR^{n_1}$ be the last column of $D_1$ and $z\in\bR^{n_2-1}$ be the first column of $D_2$ without the first entry. Write
$$
D_H = \begin{bmatrix}
    D_1 & y\mathbf{1}^t + \mathbf{1}z^t \\
    \mathbf{1}y^t + z\mathbf{1}^t & \Bar{D_2}
\end{bmatrix}\in \bR^{(n_1+n_2-1)\times (n_1+n_2-1)},
$$where 
$$
D_2 = \begin{bmatrix}
    0 & z^t \\
    z & \Bar{D_2}
\end{bmatrix}.
$$
Then $D_2\xi = \mathbf{0}$ implies $z^t\Bar{\xi}=0$ and $\xi_1z + \Bar{D_2}\Bar{\xi} = \mathbf{0}_{n_2-1}.$ Therefore,

$$
D_H\zeta = \begin{bmatrix}
        \mathbf{0}_{n_1} \\
        \a \mathbf{1}y^t\eta + \a z\mathbf{1}^t\eta
    \end{bmatrix} + 
    \begin{bmatrix}
        y\mathbf{1}^t \Bar{\xi} \\
        -\xi_1z
    \end{bmatrix} + (s+\xi_1)\begin{bmatrix}
        y \\
        z
    \end{bmatrix}.
$$
Note that $D_1\eta = \mathbf{0}_{n_1}$ gives $y^t\eta = 0.$ Thus,
$$
D_H\zeta = \begin{bmatrix}
    (\mathbf{1}^t\xi + s)y \\
    (\mathbf{1}^t \eta \a + s)z
\end{bmatrix}.
$$
Set $s=-\mathbf{1}^t\xi$ and $\a = \frac{\mathbf{1}^t\xi}{\mathbf{1}^t\eta}.$ Note that $\a$ is well-defined since $\mathbf{1}^t \eta \neq 0.$ Then
$$
D_H\zeta = \mathbf{0}_{n_1+n_2-1}.
$$
In addition, we have
$$
\<\zeta, \mathbf{1}\> = \a \mathbf{1}^t\eta + \mathbf{1}^t\xi +s = \mathbf{1}^t\xi \neq 0.
$$
Therefore, 
$$
\mathbf{1}\not\in (\mynull D_H)^\perp = \myimage D_H.
$$ This implies that $D_H x= \mathbf{1}$ does not have a solution.
\end{proof}


\subsection{Perron Eigenvector of Distance Matrix}
In his work \cite{firsteig}, Steinerberger proves that if $\eta$ is the Perron eigenvector of the distance matrix of a graph (the first eigenvector whose entries are nonnegative), then $\<\eta,\mathbf{1}\>^2 \geq \frac{n}{2},$ where $n$ is the number of vertices. We provide a lower bound when the graph is a tree involving the number of leaves.

\begin{proposition}
    Let $T$ be a tree with $n$ vertices and $l$ leaves. Let $D$ be its distance matrix, $\l$ be its largest positive eigenvalue (Perron root), and $\eta$ be the Perron eigenvector of $D$ with $\|\eta\|_2 = 1.$ Then
    $$
    \<\eta,\mathbf{1}\>^2 > \frac{n}{2}(\frac{\l}{\l-l+1})+ \frac{n-l-1}{\l-l+2}.
    $$
\end{proposition}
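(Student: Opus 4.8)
The plan is to work in an orthonormal eigenbasis of $D$ and to track how the vector $\mathbf 1$ distributes over it. Write the eigenvalues of $D$ as $\lambda = \lambda_1 > \lambda_2 \ge \cdots \ge \lambda_n$ with orthonormal eigenvectors $\eta = \eta_1, \eta_2, \ldots, \eta_n$, and set $c_i = \<\mathbf 1, \eta_i\>$, so that $c_1 = \<\eta, \mathbf 1\> = S$ (which is positive, as $\eta \ge 0$) and $\sum_{i=1}^n c_i^2 = \|\mathbf 1\|_2^2 = n$. First I would record that for a tree $D$ is nonsingular with exactly one positive eigenvalue: this follows from Bapat's identity $D\tau = (n-1)\mathbf 1$ (with $\tau = \mathbf 2 - (\deg v_1, \ldots, \deg v_n)^t$) together with the Graham--Pollak determinant formula, and it forces $\lambda_i < 0$ for every $i \ge 2$. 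Consequently $\mathbf 1 \in \myimage D$, and the whole analysis takes place on the negative part of the spectrum.

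The engine is a pair of moment identities for the off--$\eta$ part $r = \mathbf 1 - S\eta \perp \eta$, which satisfies $\|r\|_2^2 = n - S^2 = \sum_{i \ge 2} c_i^2$ and, using $D\eta = \lambda\eta$,
\[
r^t D r = \mathbf 1^t D \mathbf 1 - \lambda S^2 = -\sum_{i \ge 2} |\lambda_i|\, c_i^2 .
\]
Thus $P := n - S^2 = \sum_{i\ge 2} c_i^2$ and $Q := \lambda S^2 - \mathbf 1^t D\mathbf 1 = \sum_{i \ge 2}|\lambda_i|\, c_i^2$ are the zeroth and first moments of the measure placing mass $c_i^2$ at $|\lambda_i|$. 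For any $\theta$ with $\theta \ge \lambda_2$ one has $r^t D r \le \theta \|r\|_2^2$, which rearranges to the master inequality
\[
S^2 \ge \frac{\mathbf 1^t D\mathbf 1 - \theta n}{\lambda - \theta}, \qquad \lambda_2 \le \theta < \lambda ,
\]
and taking $\theta = \lambda_2$ (equivalently, bounding $Q/P \ge \min_{i\ge 2}|\lambda_i|$) is sharp: it is an equality for a star. Here $\mathbf 1^t D\mathbf 1 = 2W(T)$ is twice the Wiener index.

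To reach the stated two--term bound I would feed two leaf--dependent estimates into this framework. The first is a lower bound on $2W(T)$ in terms of $n$ and $l$ (trees with many leaves are star--like, and this is the elementary ingredient). The second, decisive ingredient is spectral: control of where the mass $c_i^2$ can sit. Here the Graham--Lovász formula $D^{-1} = -\tfrac12 L + \tfrac{1}{2(n-1)}\tau\tau^t$ is the natural tool, since any Laplacian eigenvector for eigenvalue $1$ that is orthogonal to $\tau$ is a $D$--eigenvector for eigenvalue $-2$, and such vectors are automatically orthogonal to $\mathbf 1$, so they carry no mass ($c_i = 0$). A Faria--type lower bound on the multiplicity of the Laplacian eigenvalue $1$ (in terms of pendant and quasipendant vertices) then pins down a large, $\mathbf 1$--orthogonal $(-2)$--eigenspace and confines the mass of $\mathbf 1$ to comparatively few eigenvalues, which is how the parameter $l$ should enter the denominators $\lambda - l + 1$ and $\lambda - l + 2$. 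Separating the contribution of this distinguished eigenspace from the remaining $\approx n - l - 1$ directions, and combining with Steinerberger's baseline $\<\eta,\mathbf 1\>^2 \ge n/2$ of \cite{firsteig} (rewriting the target as $\tfrac n2 + \tfrac{n(l-1)}{2(\lambda - l + 1)} + \tfrac{n - l - 1}{\lambda - l + 2}$ makes the two corrections visible), should yield the inequality; the final strictness is strictness of Cauchy--Schwarz, since a tree on at least three vertices has non-constant transmission (a leaf and its neighbor differ by $n-2$), so $\mathbf 1$ is not an eigenvector of $D$ and $r \ne 0$.

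The step I expect to be the main obstacle is the spectral one: turning the qualitative picture ``the $(-2)$--eigenspace is large and $\mathbf 1$--orthogonal'' into the precise allocation of the moments $P$ and $Q$ that produces the exact denominators $\lambda - l + 1$ and $\lambda - l + 2$. Bounding the smallest $|\lambda_i|$ that actually carries mass of $\mathbf 1$ — rather than using the crude positive threshold or the unknown $\lambda_2$ directly — is what separates the sharp target from the weaker estimate the master inequality gives off the shelf, and it is the place where the tree structure, and not merely nonnegativity of $D$, must be used in full.
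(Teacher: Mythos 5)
There is a genuine gap here, and you have located it yourself: the ``decisive ingredient'' that is supposed to make $l$ appear in the denominators is never carried out, and the mechanism you propose for it cannot work for general trees. Your Faria/Graham--Lov\'asz step only produces $(-2)$-eigenvectors of $D$ from pairs of leaves sharing a common neighbor (the multiplicity bound is pendants minus quasipendants, $p-q$). For the path $P_4$, or more tellingly for a spider with $l$ legs of length $2$ (where $p=q=l$), this eigenspace is empty while $l$ can be arbitrarily large; concretely, for $P_4$ one checks $\det(D+2I)=-12\neq 0$, so $-2$ is not even an eigenvalue. Hence ``confining the mass of $\mathbf 1$'' to few directions via a $(-2)$-eigenspace cannot be the source of the denominators $\lambda-l+1$ and $\lambda-l+2$, and your master inequality with $\theta=\lambda_2$ does not supply the $l$-dependence either (for the star, $\lambda_2=-2$ no matter how large $l$ is). The Wiener-index lower bound in terms of $n$ and $l$ is also left unstated. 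So what you have is a set of correct preliminary identities (the moment identities, the master inequality, nonsingularity and the single positive eigenvalue of a tree's distance matrix), but not a proof.

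For contrast, the paper's argument is elementary and never decomposes $\mathbf 1$ in the eigenbasis. It expands $\lambda=\eta^t D\eta=\sum_{i,j}d(u_i,u_j)\eta_i\eta_j$ and applies the triangle inequality through a fixed vertex $u_k$, once for every vertex. The key point --- exactly the ``use of the tree structure in full'' that your plan is missing --- is that when $u_k$ is a leaf the inequality improves by $2$: every path to or from a leaf passes through its unique neighbor, so $d(u_i,u_j)\le d(u_i,u_k)+d(u_k,u_j)-2$ for $i,j\neq k$. This gain of $2$ for each of the $l$ leaves, after summing the resulting inequalities over leaves and non-leaves separately and using $\lambda\ge n-1$ together with the crude bounds $\sum_{u_k\in L}\eta_k<\langle\eta,\mathbf 1\rangle$ and $\sum_{u_k\notin L}\eta_k^2-\sum_{u_k\in L}\eta_k^2<1$, is precisely what puts $l$ into the bound --- for every tree, whether or not any two leaves share a neighbor. (Incidentally, the paper's own derivation lands on the denominator $\lambda-l+2$ in both terms; the $\lambda-l+1$ in the printed statement, which your target rewriting reproduces, appears to be a typo, and it is the \emph{stronger} form, so any attempted proof should aim at $\lambda-l+2$.) If you want to salvage the spectral route, the leaf-gain inequality is the tree-theoretic input you would have to encode spectrally; as it stands, the argument does not close.
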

\textbf{Example}. The star graph with $n$ vertices has $l=n-1$ leaves. The eigenvalue estimate of the Perron root (see for example, \cite[Theorem 8.1.22]{horn2012matrix} and \cite[Corollary 7]{ZHOU2007384}) gives
$$
\frac{2(n-1)^2}{n}=\frac{\sum_{i,j}D_{ij}}{n} \leq \l  \leq \max_i\sum_{j=1}^nD_{ij} = 2n - 3.
$$ Then the proposition above gives
$$
\<\eta,\mathbf{1}\>^2 > \frac{(n-1)^2}{n-1} = n-1.
$$

We follow the idea in the Theorem in \cite{firsteig} with some revision.
\begin{proof}[Proof of Proposition 4]
    Let $V=\{u_1,...,u_n\}$ be the vertices of the tree $T.$ Let $L \subset V$ be the leaves of $T.$ 
    Assume $u_k$ is not a leaf with $k$ fixed. Then
    \begin{align*}
        \l &= \sum_{i,j=1}^nd(u_i,u_j)\eta_i\eta_j \\
        &=\sum_{i\neq j}d(u_i,u_j)\eta_i\eta_j \\
        &\leq \sum_{i\neq j} (d(u_i,u_k)+d(u_k,u_j)) \eta_i\eta_j \\
        &= \sum_{i,j=1}^n(d(u_i,u_k) + d(u_k,u_j))\eta_i\eta_j - \sum_{i=1}^n2d(u_i,u_k)\eta_i^2.
    \end{align*}
    Therefore, $$
    \l + 2\sum_{i=1}^nd(u_i,u_k)\eta_i^2 \leq 2 \<\eta,\mathbf{1}\>\l \eta_k.
    $$
    Note that
    $$
    \sum_{i=1}^n d(u_i,u_k)\eta_i^2 = \sum_{i\neq k}d(u_i,u_k)\eta_i^2 \geq \sum_{i\neq k}\eta_i^2 = \|\eta\|_2^2 - \eta_k^2 = 1 - \eta_k^2.
    $$
    Thus, we get 
    $$
        \l+2 -2\eta_k^2 \leq 2\<\eta,\mathbf{1}\>\l\eta_k.
    $$
    Rearranging the terms and summing $k$ over all non-leaves, we get
    \begin{equation}
        \label{eq:nonleaf}
        \l(n-l) \leq 2 \<\eta,\mathbf{1}\>\l\sum_{k:u_k \not\in L}\eta_k + 2 \sum_{k:u_k\not\in L}\eta_k^2 - 2(n-l).
    \end{equation}
    
    On the other hand, suppose $u_k \in L$ is a leaf with $k$ fixed. If $i,j \neq k$ then
    $$
    d(u_i,u_j) \leq d(u_i,u_k) + d(u_k,u_j)- 2.
    $$
    To see this, assume that $u_k$ is adjacent to the vertex $u_{k'}.$ Then 
    \begin{align*}
    d(u_i,u_k)&=d(u_i,u_{k'})+1    \\
    d(u_j,u_k)&=d(u_j,u_k')+1.
    \end{align*}
    Thus,
    $$
    d(u_i,u_j) \leq d(u_i,u_{k'})+d(u_j,u_{k'}) = d(u_i,u_k)+d(u_j,u_k) - 2.
    $$
    Then we have
    \begin{align*}
        \l &= \sum_{i,j}\eta_i\eta_jd(u_i,u_j) \\
        &\leq \sum_{i,j \neq k}\eta_i\eta_j(d(u_i,u_k) + d(u_k,u_j) - 2) + 2 \sum_{i \neq k}\eta_i\eta_kd(u_i,u_k) + \eta_k^2d(u_k,u_k) \\
        &= 2(\<\eta,\mathbf{1}\> - \eta_k) \l \eta_k - 2(\<\eta,\mathbf{1}\> - \eta_k)^2 + 2\l \eta_k^2 \\
        &= (2\l + 4)\eta_k\<\eta,\mathbf{1}\> - 2 \<\eta,\mathbf{1}\>^2 -2\eta_k^2.
    \end{align*}
    By summing $k$ over all leaves, we get
    \begin{equation}
        \label{eq:leaf}
        \l l \leq (2\l + 4)\<\eta,\mathbf{1}\>\sum_{k:u_k \in L}\eta_k - 2\<\eta,\mathbf{1}\>^2l - 2\sum_{k:u_k\in L}\eta_k^2.
    \end{equation}
    Thus, adding equations \eqref{eq:nonleaf} and \eqref{eq:leaf}, we get
    $$
    \l n\leq (2\l - 2l)\<\eta, \mathbf{1}\>^2 + 4 \<\eta,\mathbf{1}\>\sum_{k:u_k \in L}\eta_k + 2 (\sum_{k:u_k \not\in L}\eta_k^2 - \sum_{k:u_k\in L}\eta_k^2) - 2(n-l).
    $$
    Since
    $$
        \sum_{k:u_k\not\in L}\eta_k^2 - \sum_{k:u_k \in L} \eta_k^2 < \sum_{k:u_k\not\in L}\eta_k^2 + \sum_{k:u_k \in L} \eta_k^2 = 1
        $$ and $$
        \sum_{k:u_k \in L}\eta_k < \<\eta,\mathbf{1}\>,
    $$
    we get 
    $$
    \l n < (2\l-2l+4)\<\eta,\mathbf{1}\>^2 + 2 - 2(n-l).
    $$
    Note that $l \leq n-1$ since $T$ is a tree. The eigenvalue estimate \cite[Theorem 8.1.22]{horn2012matrix} gives
    $$
    \l \geq \min_i \sum_{j=1}^nD_{ij} \geq n-1.
    $$
    Therefore, $\l-l+2 > 0.$ Thus,
    $$
    \<\eta,\mathbf{1}\>^2 > \frac{n}{2}(\frac{\l}{\l - l + 2}) + \frac{n-l-1}{\l-l+2}.
    $$
    
\end{proof}

\section*{Acknowledgement}
W.-C. Chen and M.-P. Tsui are supported by NSTC grant 109-2115-M-002-006.
The first author thanks Dr. Jephian C.-H. Lin for his comments on this paper and anonymous referees for their ideas on Proposition \ref{prop:curv_upper_half_total}.

\nocite{*}
\bibliographystyle{amsplain}
\bibliography{reference}

\end{document}